
\documentclass[12pt,a4paper]{article}%
\usepackage{amsmath, amsfonts, amsthm,color, latexsym, amssymb}
\usepackage{amscd}
\usepackage{amsmath}
\usepackage{amsfonts}
\usepackage{amssymb}
\usepackage{graphicx}%
\usepackage[active]{srcltx}
\setcounter{MaxMatrixCols}{30}
\textwidth=16cm
\textheight=23cm
\hoffset=-10mm
\voffset=-20mm
\providecommand{\U}[1]{\protect\rule{.1in}{.1in}}
\providecommand{\U}[1]{\protect\rule{.1in}{.1in}}
\providecommand{\U}[1]{\protect\rule{.1in}{.1in}}
\newtheorem{theorem}{Theorem}[section]
\newtheorem{proposition}[theorem]{Proposition}
\newtheorem{corollary}[theorem]{Corollary}
\newtheorem{example}[theorem]{Example}

\newtheorem{lemma}[theorem]{Lemma}
\newtheorem{definition}[theorem]{Definition}

\begin{document}

\title{Mixing convolution operators on spaces of entire functions}
\author{V. V.
F\'{a}varo\thanks{Supported by FAPESP Grant 2014/50536-7; FAPEMIG Grant PPM-00086-14; and CNPq Grants 482515/2013-9, 307517/2014-4.
\newline 2010 Mathematics Subject Classification: 47A16, 46A11, 46G20.}\,\,, J.
Mujica}
\date{}
\maketitle

\vspace{-0.8 cm}

\begin{center}
\it Dedicated to the memory of Jorge Alberto Barroso (1928-2015)
\end{center}

\vspace{0.2 cm}

\begin{abstract}
We show that if $E$ is an arbitrary $(DFN)$-space, then every nontrivial convolution operator on the Fr\'echet nuclear space $\mathcal{H}(E)$ is mixing, in particular hypercyclic. More generally we obtain the same conclusion when $E=F^{\prime}_c,$ where $F$ is a separable Fr\'echet space with the approximation property. On the opposite direction we show that a translation operator on the space $\mathcal{H}(\mathbb{C}^{\mathbb{N}}) $ is never hypercyclic.
\end{abstract}

\section{Introduction}
If $X$ is a topological vector space, then a continuous linear operator $T\colon X\rightarrow X$ is said to be \emph{hypercyclic}
if its \emph{orbit} $\{x,T(x),T^{2}(x),\ldots\}$ is dense in $X$ for some $x\in X$. In
this case, $x$ is said to be a \emph{hypercyclic vector for $T$}. There are several directions and ramifications of the study of hypercyclic operators. References \cite{bay, goswinBAMS} can provide an overview of the theory.

In this work we are particularly interested in the hypercyclicity of operators on spaces of entire functions.
Godefroy and Shapiro \cite{godefroy} proved that every nontrivial convolution operator on the space $\mathcal{H}(\mathbb{C}^n)$ of entire functions of several complex variables is hypercyclic. By a \emph{nontrivial convolution operator} we mean a convolution operator which is not a scalar multiple of the identity. 
This result generalizes classical results of Birkhoff \cite{birkhoff}
 and MacLane \cite{maclane} on hypercyclicity of translation and differentiation ope\-ra\-tors, respectively, on the space $\mathcal{H}(\mathbb{C})$ of entire functions of one complex variable.
Also, several results on the hypercyclicity of operators on spaces of entire functions on infinite dimensional Banach spaces have appeared in the last few decades (see,
e.g., \cite{aronbes, BBFJ, bes2012, CDSjmaa, GS,
goswinBAMS, MPS, peterssonjmaa}).
In \cite{BBFJ}, the authors proved a general result on hypercyclicity of convolution operators on the space $\mathcal{H}_{\Theta b}(E)$ of entire functions of $\Theta$-bounded type on a complex Banach space $E$.

If $X$ is a topological vector space, then a continuous linear operator $T\colon X\rightarrow X$ is said to be \emph{mixing} if for any two non-empty open sets $U,V\subset X,$ there is $n_0\in \mathbb{N}$ such that $T^n(U)\cap V\neq\emptyset$, for all $n\geq n_0.$ Actually it is known that if $X$ is a Fr\'echet space, then a continuous linear operator $T\colon X\rightarrow X$ is mixing if and only if it is \emph{hereditarily hypercyclic}, that is, for any strictly increasing sequence $(n_j)\subset\mathbb{N}$ there exists $x\in X$ such that the sequence $(T^{n_j}(x))$ is dense in $X$ (this is proved in \cite{grivaux} in the case of Banach spaces, but the proof works equally well in the case of Fr\'echet spaces).

In this work we show that if $E$ is an arbitrary $(DFN)$-space, then every nontrivial convolution operator on the Fr\'echet space $\mathcal{H}(E)$ is mixing. More generally we obtain the same conclusion when $E=F^{\prime}_c,$ where $F$ is a separable Fr\'echet space with the approximation property. It is clear that the first result follows from the second one, but we have preferred to present both results separately to illustrate the usefulness of different holomorphy types. Besides that we first obtained the result in the case of $(DFN)$-spaces, whose proof is simpler, and later on we extended the result to the case of $(DFC)$-spaces. 


Our proofs combine results for the spaces  $\mathcal{H}_{\Theta b}(E)$ (with $E$ a normed space) explored in \cite{BBFJ, favaro_jatoba} with a factorization method introduced by Co\-lom\-beau and Matos \cite{colombeau_matos} for the space $\mathcal{H}_{uN b}(E)$ (with $E$ a locally convex space), whose idea can be adapted for any holomorphy type instead of the nuclear type. Besides, our proofs rest on a classical hypercyclicity criterion, first obtained by Kitai \cite{kitai}, but never published, rediscovered by Gethner and Shapiro \cite{GS} and later on sharpened by Costakis and Sambarino \cite{costakis}.  

We finish the paper showing that we can not expect hypercyclicity of convolution operators when $E$ is an arbitrary locally convex space. In contrast with the fact that every nontrivial convolution operator on $\mathcal{H}(\mathbb{C}^n)$ is hypercyclic, we prove that every translation operator on  $\mathcal{H}(\mathbb{C}^{\mathbb{N}})$ is not hypercyclic, considering on $\mathcal{H}(\mathbb{C}^{\mathbb{N}})$ any of the three usual topologies: the compact open topology $\tau_0$, the Nachbin ported topology $\tau_{\omega}$, or the bornological topology $\tau_\delta.$ 

Throughout this paper $\mathbb{N}$ denotes the set of positive integers and
$\mathbb{N}_{0}$ denotes the set $\mathbb{N}\cup\{0\}$. All the locally convex spaces are assumed to be complex and Hausdorff. By $\Delta$ we mean the open unit disk in the complex field $\mathbb{C}$. If $E$ is a locally convex space, then $E_b^{\prime}$ (resp. $E_c^{\prime}$) denotes the dual $E^\prime$ of $E$ with the topology of bounded convergence (resp. compact convergence). If $E$ and $F$ are normed spaces, with $F$ complete, then the Banach space of all continuous $m$-homogeneous
polynomials from $E$ into $F$ endowed with its usual sup norm is denoted by $\mathcal{P}(^{m}E;F)$. The subspace of $\mathcal{P}(^{m}E;F)$ of all polynomials of finite type is represented by $\mathcal{P}_f(^{m}E;F)$. For $E$ and $F$ locally convex spaces, with $F$ complete,  $\mathcal{H}(E;F)$ denotes the vector space of all holomorphic mappings from $E$ into $F$.
In all these cases, when $F = \mathbb{C}$ we write $\mathcal{P}(^{m}E)$, $\mathcal{P}_f(^{m}E)$  and $\mathcal{H}(E)$
instead of $\mathcal{P}(^{m}E;\mathbb{C})$, $\mathcal{P}_f(^{m}E;\mathbb{C})$  and $\mathcal{H}(E;\mathbb{C})$, respectively. For the general theory of homogeneous polynomials and holomorphic functions we refer to Dineen \cite{Di2} and Mujica \cite{mujica}.

Finally, $cs(E)$ denotes the set of all continuous seminorms on the locally convex space $E$. If $p \in cs(E)$, 
then $E_{p}$ denotes the normed space $(E,p)/p^{-1}(0)$, and $\pi_{p}$ denotes the canonical 
surjective mapping $\pi_{p}\colon E \rightarrow E_{p}$. We say that $D \subset cs(E)$ is a 
\textit{fundamental family} if $D$ is a directed subset of $cs(E)$ and the seminorms $p \in D$ 
generate the topology of $E$. 

\section{Preliminaires}

In this section we recall the concepts and results about holomorphic functions on normed spaces that we need and we introduce some analogue concepts for holomorphic functions defined on locally convex spaces. It is important to say that all definitions of this section and all results of \cite{BBFJ} and \cite{favaro_jatoba} that we will use during the paper were originally stated for $E$ and $F$ Banach spaces,  with exception of Definitions \ref{def_holomorfas_elc}, \ref{def-borel} and \ref{def-convolution} that are introduced in this paper for the first time. But it is clear that they are still valid if we consider $E$ only normed.

\begin{definition}\rm Let $E$ and $F$ be normed spaces, with $F$ complete, and $U$ be an open subset of $E$. A mapping $f \colon U \longrightarrow F$ is said to be {\it holomorphic on $U$} if for every $a \in U$ there exists a sequence $(P_m)_{m=0}^\infty$, where each $P_m \in {\cal P}(^mE;F)$ ($\mathcal{P}(^{0}E;F)=F$), such that $f(x) = \sum_{m=0}^\infty P_m(x-a) $ uniformly on some open ball with center $a$. The $m$-homogeneous polynomial $m!P_m$ is called the {\it $m$-th derivative of $f$ at $a$} and is denoted by $\hat d^m f(a)$. In particular, if $P \in {\cal P}(^mE;F)$, $a \in E$ and $k \in \{0, 1, \ldots, m\}$, then
$$\hat d^k P(a)(x) = \frac{m!}{(m-k)!}\check P(\underbrace{x,\ldots,x}_{k\, \rm{times}},a,\ldots,a)  $$
for every $x \in E$, where $\check P$ denotes the unique symmetric $m$-linear mapping associated to $P$. 
\end{definition}

\begin{definition}\rm (Nachbin \cite{nachbin2})
\label{Definition tipo holomorfia} Let $E$ and $F$ be normed spaces, with $F$ complete. A \emph{holomorphy type} $\Theta$
from $E$ to $F$ is a sequence of Banach spaces $(\mathcal{P}_{\Theta}%
(^{m}E;F))_{m=0}^{\infty}$, the norm on each of them being denoted by
$\|\cdot\|_{\Theta}$, such that the following conditions hold true:

\item[$(1)$] Each $\mathcal{P}_{\Theta}(^{m}E;F)$ is a linear
subspace of $\mathcal{P}(^{m}E;F)$.

\item[$(2)$] $\mathcal{P}_{\Theta}(^{0}E;F)$ coincides with
$\mathcal{P}(^{0}E;F)=F$ as a normed vector space.

\item[$(3)$] There is a real number $\sigma\geq1$ for which the
following is true: given any $k\in\mathbb{N}_{0}$, $m\in\mathbb{N}_{0}$,
$k\leq m $, $a\in E$ and $P\in\mathcal{P}_{\Theta}(^{m}E;F)$, we have
\[
\hat{d}^{k}P(a)\in\mathcal{P}_{\Theta}(^{k}E;F) {\rm ~~and}
\]
\[
\left\|  \frac{1}{k!}\hat{d}^{k}P(a)\right\|  _{\Theta}\leq\sigma
^{m}\|P\|_{\Theta}\|a\|^{m-k}.
\]
\end{definition}
\noindent A holomorphy type from $E$ to $F$ shall be denoted by either $\Theta$ or $( \mathcal{P}_{\Theta}(^{m}E;F))_{m=0}^\infty$.


\begin{definition}\rm
\label{Definition f holomorfia} Let $(\mathcal{P}_{\Theta}%
(^{m}E;F))_{m=0}^{\infty}$ be a holomorphy type from the normed space $E$ to the Banach space $F$. A given
$f\in\mathcal{H}(E;F)$ is said to be of \emph{$\Theta$-bounded type} if

\item[$(1)$] $\frac{1}{m!}\hat{d}^{m}f(0)\in\mathcal{P}_{\Theta}%
(^{m}E;F)$ for all $m\in\mathbb{N}_{0}$,

\item[$(2)$] $\displaystyle\lim_{m \rightarrow\infty} \left(  \frac{1}{m!}%
\|\hat{d}^{m}f(0)\|_{\Theta}\right)  ^{\frac{1}{m}}=0.$

\noindent The linear subspace of $\mathcal{H}(E;F)$ of all functions $f$ of $\Theta$-bounded type is denoted by $\mathcal{H}_{\Theta b}(E;F)$.

\end{definition}

For each $\rho>0,$ the correspondence
\[
f\in\mathcal{H}_{\Theta b}(E;F)\mapsto \Vert f\Vert_{\Theta,\rho}%
=\sum_{m=0}^{\infty}\frac{\rho^{m}}{m!}\Vert\hat{d}^{m}f(0)\Vert_{\Theta
}<\infty\]
is a well defined seminorm and 
$\mathcal{H}_{\Theta b}(E;F)$ becomes a Fr\'echet space when endowed with the locally convex topology generated by these
seminorms (see, e.g, \cite[Proposition 2.3]{favaro_jatoba}). 

When $F=\mathbb{C}$ we write $\mathcal{H}_{\Theta b}(E)$ instead of $\mathcal{H}_{\Theta b}(E;\mathbb{C})$ and
when $\Theta$ is the \textit{current holomorphy type}, that is when $\mathcal{P}_{\Theta}(^{m}E;F)
 = \mathcal{P}(^{m}E;F)$ for every $m \in \mathbb{N}_{0}$, we write $\mathcal{H}_{b}(E;F)$ instead of
$\mathcal{H}_{\Theta b}(E;F)$ and $\Vert \cdot\Vert_{\rho}$ instead of $\Vert \cdot\Vert_{\Theta,\rho}$.

\medskip

Following Colombeau and Matos \cite{colombeau_matos} we introduce a vector subspace of $\mathcal{H}(E)$, when $E$ is a locally convex space, that will play a central role in this paper.
 
\begin{definition}
\label{def_holomorfas_elc}\rm Let $E$ be a locally convex space and $F$ a Banach space. A mapping 
$f \in \mathcal{H}(E;F)$ is said to be of \textit{uniform $\Theta$-bounded type} if there exist 
$p \in cs(E)$ and $f_{p} \in \mathcal{H}_{\Theta b}(E_{p};F)$ such that $f = f_{p} \circ \pi_{p}$. 
Let $\mathcal{H}_{u \Theta b}(E;F)$ denote the vector space of all holomorphic functions of uniform 
$\Theta$-bounded type from $E$ to $F$. Let $\pi_{p}^*$ denote the injective mapping
$$ \pi_{p}^{*}\colon f_{p} \in \mathcal{H}_{\Theta b}(E_{p};F) \rightarrow f_{p} \circ \pi_{p}
\in \mathcal{H}_{u \Theta b}(E;F). $$ 
Then 
$$ \mathcal{H}_{u \Theta b}(E;F) = \bigcup_{p \in cs(E)} \pi_{p}^{*}(\mathcal{H}_{\Theta b}(E_{p};F)) $$
and we endow $\mathcal{H}_{u \Theta b}(E;F)$ with the locally convex inductive topology with respect 
to the mappings $\pi_{p}^{*}$.
Thus $\mathcal{H}_{u \Theta b}(E;F)=\textrm{ind } \mathcal{H}_{\Theta b}(E_p;F)$ is an inductive limit of Fr\'echet spaces.
\end{definition}

If $D \subset cs(E)$ is a fundamental family, then it is clear that
$$ \mathcal{H}_{u \Theta b}(E;F) = \bigcup_{p \in D} \pi_{p}^{*}(\mathcal{H}_{\Theta b}(E_{p};F)). $$ When 
$\Theta$ is the current holomorphy type, then we write $\mathcal{H}_{ub}(E;F)$ instead of 
$\mathcal{H}_{u \Theta b}(E;F)$. and when $F=\mathbb{C}$ we write $\mathcal{H}_{u\Theta b}(E)$ instead of $\mathcal{H}_{u\Theta b}(E;\mathbb{C})$

\medskip

The next definition is a slight variation of the concept of $\pi_{1}$-holomorphy type (originally introduced in \cite[Definitions 2.3]{favaro_jatoba}%
) and it can be found in \cite[Definition 2.5]{BBFJ}. 

\begin{definition}\rm 
\label{pi-tipo de holomorfia} Let $E$ and $F$ be normed spaces, with $F$ complete. A holomorphy type $(\mathcal{P}_{\Theta}%
(^{m}E;F))_{m=0}^{\infty}$ from $E$ to $F$ is said to be a \emph{$\pi_{1}$-holomorphy type} if the following conditions hold:

\item[(a1)] Polynomials of finite type belong to $(\mathcal{P}_{\Theta}%
(^{m}E;F))_{m=0}^{\infty}$ and there exists $K>0$ such that $$\Vert\phi^{m}\cdot b\Vert_{\Theta
}\leq K^{m}\Vert\phi\Vert^{m}\cdot\Vert b\Vert$$ for all $\phi\in E^{\prime}_b$,
$b\in F$ and $m\in\mathbb{N}$;

\item[(a2)] For each $m\in\mathbb{N}_{0}$, $\mathcal{P}_{f}(^{m}E;F)$ is dense in $(\mathcal{P}_{\Theta}(^{m}E;F),\Vert\cdot\Vert_{\Theta})$.

\end{definition}

The main examples that we are interested in are the following:

\begin{example}\label{nuclear}\rm
Let $E$ and $F$ be normed spaces, with $F$ complete. 

\item[(a)] It is clear that the sequence of nuclear polynomials $\left(\mathcal{P}_{N}\left(^{m}E;F\right)\right)_{m=0}^\infty$ is a $\pi_1$-holomorphy type (see \cite{favaro_jatoba} or \cite{gupta}), which defines the Fr\'echet space $\mathcal{H}_{Nb}\left(E;F\right)$ of entire functions of nuclear bounded type.

\item[(b)] A polynomial $P\in\mathcal{P}\left(^{m}E;F\right)$  is said to be \emph{approximable} if $P\in\overline{\mathcal{P}_f\left(^{m}E;F\right)}^{\Vert\cdot\Vert}$. Let $\mathcal{P}_{A}\left(^{m}E;F\right)$ denotes the subspace of all approximable members of $\mathcal{P}\left(^{m}E;F\right)$, endowed with the sup norm. Then it is clear that the sequence $\left(\mathcal{P}_{A}\left(^{m}E;F\right)\right)_{m=0}^\infty$ is a $\pi_1$-holomorphy type, which defines the Fr\'echet space $\mathcal{H}_{Ab}\left(E;F\right)$ of entire functions of approximable bounded type.
\end{example}

\begin{definition}
\label{borel}\rm  Let $E$ and $F$ be normed spaces, with $F$ complete and let $(\mathcal{P}_{\Theta}(^{m}E;F))_{m=0}^{\infty}$ 
be a $\pi_{1}$-holomorphy type.

(a)  We recall that the \textit{polynomial Borel transform} 
$$ \mathcal{B}_{m}\colon \mathcal{P}_{\Theta}(^{m}E;F)^{\prime} \rightarrow \mathcal{P}(^{m}E^{\prime}_b; F^{\prime}_b) $$ 
is defined by
$$ (\mathcal{B}_{m}T)(\phi)(y) = T(\phi^{m}y) \hspace{.1in} \mbox{for every $T \in \mathcal{P}_{\Theta}(^{m}E;F)^{\prime}$, $\phi \in E^{\prime}_b$, $y \in F$}. $$ 
Then $\mathcal{B}_{m}$ is linear, continuous and injective. The image of $\mathcal{B}_{m}$ in $\mathcal{P}(^{m}E^{\prime}_b; F^{\prime}_b)$ is denoted by
$\mathcal{P}_{\Theta^{\prime}}(^{m}E^{\prime}_b; F^{\prime}_b)$, and the function 
$$ \mathcal{B}_{m}T \in \mathcal{P}_{\Theta^{\prime}}(^{m}E^{\prime}_b; F^{\prime}_b) \rightarrow \|T\| \in \mathbb{R} $$
defines a norm $\|.\|_{\Theta^{\prime}}$ on $\mathcal{P}_{\Theta^{\prime}}(^{m}E^{\prime}_b; F^{\prime}_b)$. Thus 
$(\mathcal{P}_{\Theta}(^{m}E;F)^{\prime}, \|\cdot\|)$ is isometrically isomorphic to 
$(\mathcal{P}_{\Theta^{\prime}}(^{m}E^{\prime}_b;F^{\prime}_b), \|\cdot\|_{\Theta^{\prime}})$. 

\medskip

\noindent (b) A function $f \in \mathcal{H}(E^{\prime}_b)$ is said to be of \textit{$\Theta^{\prime}$-exponential type} if 
$\hat{d}^{m}f(0) \in \mathcal{P}_{\Theta^{\prime}}(^{m}E^{\prime}_b)$ for every $m \in \mathbb{N}_{0}$, 
and there are $C, c \ge 0$ such that $\| \hat{d}^{m}f(0) \|_{\Theta^{\prime}} \le Cc^{m}$ for every $m \in \mathbb{N}_{0}$. 
The vector space of all entire functions 
of $\Theta^{\prime}$-exponential type on $E^{\prime}_b$ is denoted by $Exp_{\Theta^{\prime}}(E^{\prime}_b)$ (see \cite[p. 915]{favaro_jatoba}). 
\end{definition}

By \cite[Theorem 2.1]{favaro_jatoba} the \textit{holomorphic Borel transform}
$$ \mathcal{B}\colon \left[\mathcal{H}_{\Theta b}(E)\right]^{\prime}_b \rightarrow Exp_{\Theta^{\prime}}(E^{\prime}_b), $$ 
which is defined by
$$ (\mathcal{B}T)(\phi) = T(e^{\phi}) \hspace{.1in} \mbox{for every $T \in \left[\mathcal{H}_{\Theta b}(E)\right]^{\prime}_b$ and $\phi \in E^{\prime}_b$}, $$
is a vector space isomorphism.

\medskip

Let $E$ be a locally convex space such that there exist a fundamental family $D\subset cs(E)$ such that $(\mathcal{P}_{\Theta}(^{m}E_p))_{m=0}^{\infty}$ is a $\pi_{1}$-holomorphy type for every $p\in D$. Then it is clear that for each $\phi \in E^{\prime}_b$ there exist $p \in cs(E)$ and 
$\phi_{p} \in (E_{p})^{\prime}_b$ such that $\phi = \phi_{p} \circ \pi_{p}$.
Thus, for $T \in \mathcal{H}_{u \Theta b}(E)^{\prime}$ we have
$$ T(e^{\phi}) = T(e^{\phi_{p} \circ \pi_{p}}) = T \circ \pi_{p}^{*}(e^{\phi_{p}}) = T_{p}(e^{\phi_{p}}), $$
with $T_{p} = T \circ \pi_{p}^{*} \in \mathcal{H}_{\Theta b}(E_{p})$. By the preceding definition the function 
$\phi_{p} \in (E_{p})^{\prime}_b \rightarrow T_{p}(e^{\phi_{p}}) \in \mathbb{C}$ belongs to $Exp_{\Theta^{\prime}}((E_{p})^{\prime}_b)$. 
\vspace{.1in}

Now it makes sense the next definition:

\begin{definition}\label{def-borel} 
\rm Let $E$ be a locally convex space such that there exist a fundamental family $D\subset cs(E)$ such that $(\mathcal{P}_{\Theta}(^{m}E_p))_{m=0}^{\infty}$ is a $\pi_{1}$-holomorphy type for every $p\in D$. We denote by 
$Exp_{\Theta^{\prime}}(E^{\prime}_b)$ the subspace of all $f \in \mathcal{H}(E^{\prime}_b)$ such that
$f \circ \pi_{p}^{*} \in Exp_{\Theta^{\prime}}((E_{p})_b^{\prime})$ for some $p \in cs(E)$. We define the
\textit{holomorphic Borel transform}
$$ \mathcal{B}\colon \left[\mathcal{H}_{u \Theta b}(E)\right]^{\prime}_b \rightarrow Exp_{\Theta^{\prime}}(E^{\prime}_b) $$ by
$$ (\mathcal{B}T)(\phi) = T(e^{\phi}) \hspace{.1in} \mbox{for every $T \in \left[\mathcal{H}_{u\Theta b}(E)\right]^{\prime}_b$,
$\phi \in E^{\prime}_b$}. $$ 
\end{definition}

\medskip

Finally we recall the concept of convolution operator on $\mathcal{H}_{\Theta b}(E)$ when $E$ is a normed space and we introduce convolution operators on $\mathcal{H}_{u\Theta b}(E)$ in the case that $E$ is a locally convex space. 

\begin{definition}\label{convolucao}\rm
(\cite[Definition 3.1]{favaro_jatoba}) Let $E$ be a normed space.\\
(a) A \textit{convolution operator} on $\mathcal{H}_{\Theta b}(E)$ 
is a continuous linear mapping 
$$ L\colon \mathcal{H}_{\Theta b}(E) \rightarrow \mathcal{H}_{\Theta b}(E) $$ 
such that $L(\tau_{a}f) = \tau_{a}(Lf)$ for every $f \in \mathcal{H}_{\Theta b}(E)$ and $a \in E$ We recall that $(\tau_{a}f)(x) = f(x-a)$ for every $x \in E$ and we denote by $\mathcal{A}_{\Theta b}(E)$ the vector space of all convolution operators on $\mathcal{H}_{\Theta b}(E)$.\\
(b) The linear mapping
$$ \Gamma\colon \mathcal{A}_{\Theta b}(E) \rightarrow \mathcal{H}_{\Theta b}(E)^{\prime} $$ is defined by
$$ (\Gamma L)(f) = (Lf)(0) \hspace{.1in} \mbox{for every $L \in \mathcal{A}_{\Theta b}(E)$ and 
$f \in \mathcal{H}_{\Theta b}(E)$}. $$
\end{definition}

\begin{definition} 
\label{def-convolution}
\rm Let $E$ be a locally convex space. A \textit{convolution operator} 
on $\mathcal{H}_{u \Theta b}(E)$ is a continuous linear mapping 
$$ L\colon \mathcal{H}_{u \Theta b}(E) \rightarrow \mathcal{H}_{u \Theta b}(E) $$ 
such that $L(\tau_{a}f) = \tau_{a}(Lf)$ for every $f \in \mathcal{H}_{u \Theta b}(E)$ and $a \in E$.

We denote by $\mathcal{A}_{u \Theta b}(E)$ the vector space of all convolution operators on $\mathcal{H}_{u \Theta b}(E)$.
The linear mapping
$$ \Gamma\colon \mathcal{A}_{u \Theta b}(E) \rightarrow \mathcal{H}_{u \Theta b}(E)^{\prime} $$ 
is defined by
$$ (\Gamma L)(f) = (Lf)(0) \hspace{.1in} \mbox{for every $L \in \mathcal{A}_{u \Theta b}(E)$ and
$f \in \mathcal{H}_{u \Theta b}(E)$}. $$

\end{definition}

\section{Convolution operators on spaces of entire functions on $(DFN)$-spaces}

A $(DFN)$-space is the strong dual of a Fr\'echet nuclear space. Nuclear spaces were introduced by Grothendieck \cite{groth} and together with normed spaces are the most important classes of locally convex spaces encountered in analysis. A very good reference for the theory of nuclear spaces is the book of Pietsch \cite{pietsch}. Holomorphic functions on nuclear spaces were first investigated by Boland \cite{boland}, but many other authors have worked in that direction. We mention, among may others, \cite{Bo, bo-di, BMV}.

The main result of this section is the following theorem. 

\begin{theorem}
\label{theo_hypercyclic}
Let $E$ be a (DFN)-space, and let $L$ be a nontrivial convolution operator on 
$\mathcal{H}(E)$. Then $L$ is mixing, in particular hypercyclic.
\end{theorem}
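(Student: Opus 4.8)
The plan is to reduce the statement to the normed (Banach) case, where the Godefroy--Shapiro style theory together with the hypercyclicity/mixing criterion of Kitai--Gethner--Shapiro--Costakis--Sambarino already applies, and then transport the conclusion up to the $(DFN)$-space $E$ via the factorization mechanism built into $\mathcal{H}_{u\Theta b}$. The first observation I would record is that when $E$ is a $(DFN)$-space, its dual $F=E_b'$ is a Fr\'echet nuclear space, and $\mathcal{H}(E)$ coincides (as a locally convex space) with $\mathcal{H}_{u\Theta b}(E)$ for a suitable holomorphy type --- the natural candidate being the nuclear type of Example \ref{nuclear}(a), since on duals of nuclear spaces every entire function factors through a continuous seminorm and is automatically of uniform nuclear bounded type. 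Thus $\mathcal{H}(E)=\mathrm{ind}\,\mathcal{H}_{Nb}(E_p)$ is an inductive limit of the Fr\'echet spaces $\mathcal{H}_{Nb}(E_p)$, each $E_p$ a normed space, and the nuclearity of $E$ forces this inductive limit to be a Fr\'echet nuclear space (so that the mixing $\Leftrightarrow$ hereditarily hypercyclic equivalence quoted in the introduction is available).

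Next I would analyze the structure of a nontrivial convolution operator $L$ via the Borel transform. Using Definitions \ref{def-borel} and \ref{def-convolution}, a convolution operator $L$ corresponds through $\Gamma$ to a functional whose holomorphic Borel transform $\mathcal{B}(\Gamma L)=:\varphi$ lives in $Exp_{\Theta'}(E_b')$; concretely $L$ acts on the exponential monomials by $L(e^{\phi})=\varphi(\phi)\,e^{\phi}$, so the functions $e^{\phi}$ are eigenvectors of $L$ with eigenvalue $\varphi(\phi)$. This is the engine of the Godefroy--Shapiro method: the eigenvectors $\{e^{\phi}:\varphi(\phi)\in\Omega\}$ associated to eigenvalues in a region $\Omega$ span a dense subspace of $\mathcal{H}(E)$ whenever $\Omega$ is a nonvoid open subset of $\mathbb{C}$, by a standard argument showing that a functional annihilating all such $e^{\phi}$ has a Borel transform vanishing on an open set, hence identically. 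Nontriviality of $L$ means $\varphi$ is not constant, so $\varphi(E_b')$ is an open subset of $\mathbb{C}$ meeting both $\{|\lambda|<1\}$ and $\{|\lambda|>1\}$ (an entire function of exponential type that is nonconstant has open, in fact everywhere-dense-in-$\mathbb{C}$, range).

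The heart of the argument is then to verify the hypotheses of the mixing criterion. I would set $X_0$ to be the span of the eigenvectors $e^{\phi}$ with $|\varphi(\phi)|<1$ and $Y_0$ the span of those with $|\varphi(\phi)|>1$; both are dense by the spanning argument above, and on $X_0$ one has $L^{n}\to 0$ while on $Y_0$ a right inverse $S$ (defined on each eigenvector by $Se^{\phi}=\varphi(\phi)^{-1}e^{\phi}$) satisfies $S^{n}\to 0$, with $LS=\mathrm{id}$ on $Y_0$. This yields the strong form of the criterion that gives \emph{mixing} (hereditarily hypercyclicity), not merely hypercyclicity, because the convergences $L^{n}\to 0$ and $S^{n}\to 0$ hold along the full sequence and transfer to arbitrary subsequences. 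The main obstacle I anticipate is purely topological rather than algebraic: one must check these convergences and densities in the inductive-limit topology of $\mathcal{H}(E)$ rather than in a single Fr\'echet step $\mathcal{H}_{Nb}(E_p)$, and one must ensure that the operator $L$, being continuous on the inductive limit, restricts compatibly to the steps so that the factorization $L=\pi_p^*\circ(\cdot)\circ(\pi_p^*)^{-1}$ behaves well --- this is precisely where the results imported from \cite{BBFJ} and \cite{favaro_jatoba} for the normed building blocks $E_p$, combined with the Colombeau--Matos factorization, must be invoked to push the normed-case estimates through the limit.
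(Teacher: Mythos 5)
Your plan is correct and follows essentially the same route as the paper's proof: identify $\mathcal{H}(E)$ with $\mathcal{H}_{uNb}(E)$ via the Colombeau--Matos/Colombeau--Mujica factorization, use Boland's result to get a separable Fr\'echet (nuclear) space, exploit the Borel transform to see that $L(e^{\phi})=\mathcal{B}(\Gamma L)(\phi)e^{\phi}$ with $\mathcal{B}(\Gamma L)$ nonconstant, take the dense spans of eigenvectors with eigenvalues of modulus less than $1$ and greater than $1$, and apply the Costakis--Sambarino mixing criterion. The only details you gloss over --- the linear independence of the exponentials (needed so that $S$ is well defined on the span) and the transfer of density from the normed steps $\mathcal{H}_{Nb}(E_p)$ to the inductive limit --- are exactly the contents of the paper's Proposition \ref{exp}, which you correctly flag as the point where the results of \cite{BBFJ} and \cite{favaro_jatoba} enter.
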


Our proof of \ref{theo_hypercyclic} rests on the following theorem, which, as mentioned in the Introduction, is due to Costakis and Sambarino \cite{costakis} and sharpens an earlier result of Kitai \cite{kitai} and Gethner and Shapiro \cite{GS}. 

\begin{theorem}
\label{criterion}Let $X$ be a separable Fr\'echet space. Then a continuous linear 
mapping $T\colon X \rightarrow X$ is mixing if there are dense subsets $Z,Y$ of $X$ and a mapping
$S\colon Y \rightarrow Y$ satisfying the following three conditions:

\textit{(a) $T^{n}(z) \rightarrow 0$ when $n \rightarrow \infty$ for every $z \in Z$.}

\textit{(b) $S^{n}(y) \rightarrow 0$ when $n \rightarrow \infty$ for every $y \in Y$.}

\textit{(c) $T \circ S (y) = y$ for every $y \in Y$.}

\end{theorem}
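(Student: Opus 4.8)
The plan is to verify the mixing property straight from its definition, producing for each pair of non-empty open sets $U,V\subset X$ a single threshold $n_0$ beyond which $T^n(U)$ meets $V$. Since $X$ is a Fréchet space it is metrizable, so I fix a metric $d$ inducing its topology; convergence in hypotheses (a) and (b) is then metric convergence, and I will exploit that these are \emph{full} limits (not limits along a subsequence), since it is exactly this that will force the conclusion to hold for all $n\ge n_0$ rather than merely for infinitely many $n$.

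First I would use density to pick approximating data. Given non-empty open $U,V$, the density of $Z$ and $Y$ yields points $z\in Z\cap U$ and $y\in Y\cap V$. The construction then rests on the auxiliary sequence
$$ x_n := z + S^n(y), $$
which is well defined because $S$ maps $Y$ into itself, so $S^n(y)\in Y$ for every $n$. Iterating condition (c) along the orbit $S^{n-1}(y),S^{n-2}(y),\dots$ gives $T^n\circ S^n=\mathrm{id}$ on $Y$, and hence, using linearity of $T^n$,
$$ T^n(x_n) = T^n(z) + T^n\bigl(S^n(y)\bigr) = T^n(z) + y. $$

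Now I invoke the two decay hypotheses together with continuity of the vector space operations. By (b), $S^n(y)\to 0$, so $x_n\to z\in U$; by (a), $T^n(z)\to 0$, so $T^n(x_n)\to y\in V$. Because $U$ and $V$ are open neighborhoods of $z$ and $y$, there exist $n_1,n_2$ with $x_n\in U$ for $n\ge n_1$ and $T^n(x_n)\in V$ for $n\ge n_2$; taking $n_0=\max\{n_1,n_2\}$, for every $n\ge n_0$ the point $T^n(x_n)$ lies in $T^n(U)\cap V$. Thus this intersection is non-empty for all $n\ge n_0$, which is precisely mixing, and mixing implies hypercyclicity.

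The step I would watch most carefully is the matching of indices: the power $T^n$ must be paired with $S^n(y)$ (rather than a fixed or subsequentially chosen index), so that the cancellation $T^n S^n(y)=y$ holds for \emph{every} $n$ and both $x_n\to z$ and $T^n(x_n)\to y$ are genuine full limits. This synchronization is the whole point of the Costakis–Sambarino sharpening, as it upgrades the Gethner–Shapiro-type conclusion from ``some strictly increasing sequence of $n$'' (hypercyclicity) to ``all $n\ge n_0$'' (mixing). I note that only the metrizable Fréchet structure is actually used here; separability, though part of the standing hypothesis, is not needed for this implication.
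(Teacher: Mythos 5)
Your proof is correct. There is, however, no in-paper argument to compare it with: the paper states Theorem~\ref{criterion} without proof, attributing it to Costakis and Sambarino \cite{costakis} as a sharpening of the hypercyclicity criterion of Kitai \cite{kitai} and Gethner and Shapiro \cite{GS}, and then only applies it. Your blind argument is essentially the standard proof of that quoted result: the synchronized sequence $x_n = z + S^n(y)$, the cancellation $T^n\circ S^n=\mathrm{id}$ on $Y$ (which, as you correctly note, requires $S(Y)\subset Y$ so that condition (c) can be iterated), and the two full limits $x_n\to z$ and $T^n(x_n)=T^n(z)+y\to y$, which give $T^n(U)\cap V\neq\emptyset$ for all $n\geq n_0$. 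What your write-up buys over the paper's citation is self-containedness, together with the observation --- accurate, and in fact it can be pushed further than you state --- that the hypotheses are stronger than needed for this implication: not only separability but also completeness, metrizability, and even the continuity of $T$ play no role in the verification; only linearity of $T$ and continuity of the vector space operations are used, so the criterion yields mixing in any topological vector space. Separability and completeness re-enter only for the side remark that mixing implies hypercyclicity (via Birkhoff's transitivity theorem), which is how the paper uses the criterion to conclude ``mixing, in particular hypercyclic.''
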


Before proving Theorem \ref{theo_hypercyclic} we need some auxiliary results.

\begin{proposition}
\label{exp}
Let $E$ be a locally convex space, and 
assume there is a fundamental family $D \subset cs(E)$ such that the sequence
$(\mathcal{P}_{\Theta}(^{m}E_{p}))_{m=0}^{\infty}$ is a $\pi_{1}$-holomorphy type
for every $p \in D$. Then:

(a) The set
$$ S_{U} = {\rm span } \{e^{\phi}: \phi \in U \} $$
is a dense subspace  of $\mathcal{H}_{u \Theta b}(E)$ for each nonvoid open subset 
$U$ of $E^{\prime}_b$.

(b) The set
$$ B = \{e^{\phi}: \phi \in E^{\prime} \} $$
is a linearly independent subset of $\mathcal{H}_{u \Theta b}(E)$.

\end{proposition}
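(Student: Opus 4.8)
The plan is to prove both parts by reducing everything to the normed (Fréchet) setting, exploiting the inductive-limit structure $\mathcal{H}_{u\Theta b}(E)=\mathrm{ind}\,\mathcal{H}_{\Theta b}(E_{p})$ together with the Borel-transform machinery set up before Definition \ref{def-borel}. For part (a), I would argue by Hahn--Banach: $S_{U}$ is dense in $\mathcal{H}_{u\Theta b}(E)$ if and only if every $T\in\left[\mathcal{H}_{u\Theta b}(E)\right]^{\prime}$ vanishing on $S_{U}$ is the zero functional. So I fix such a $T$. The hypothesis that $T$ vanishes on $S_{U}$ says precisely that $(\mathcal{B}T)(\phi)=T(e^{\phi})=0$ for every $\phi\in U$.

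Now $\mathcal{B}T\in Exp_{\Theta^{\prime}}(E^{\prime}_{b})\subset\mathcal{H}(E^{\prime}_{b})$ is a holomorphic function on the locally convex space $E^{\prime}_{b}$, which is connected (being a topological vector space). Since $\mathcal{B}T$ vanishes on the nonempty open set $U$, the identity principle for holomorphic functions on a connected open subset of a locally convex space forces $\mathcal{B}T\equiv 0$ on all of $E^{\prime}_{b}$; hence $T(e^{\phi})=0$ for \emph{every} $\phi\in E^{\prime}$. It is exactly this step that upgrades the vanishing from an arbitrary (possibly small) $U$ to all of $E^{\prime}$, which is what makes the statement useful for small $U$ later in the paper.

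To conclude $T=0$ from vanishing on all exponentials, I would descend to the Fréchet pieces of the inductive limit. For each $p\in D$ set $T_{p}:=T\circ\pi_{p}^{*}\in\left[\mathcal{H}_{\Theta b}(E_{p})\right]^{\prime}_{b}$, which is continuous because $\pi_{p}^{*}$ is. For any $\phi_{p}\in(E_{p})^{\prime}_{b}$ we have $\phi_{p}\circ\pi_{p}\in E^{\prime}$, so $T_{p}(e^{\phi_{p}})=T\bigl(\pi_{p}^{*}(e^{\phi_{p}})\bigr)=T(e^{\phi_{p}\circ\pi_{p}})=0$; that is, the normed-space Borel transform satisfies $\mathcal{B}_{p}T_{p}=0$. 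By the isomorphism $\mathcal{B}_{p}\colon\left[\mathcal{H}_{\Theta b}(E_{p})\right]^{\prime}_{b}\to Exp_{\Theta^{\prime}}((E_{p})^{\prime}_{b})$ of \cite[Theorem 2.1]{favaro_jatoba} (in particular its injectivity) this gives $T_{p}=0$ for every $p\in D$. Since $\mathcal{H}_{u\Theta b}(E)=\bigcup_{p\in D}\pi_{p}^{*}(\mathcal{H}_{\Theta b}(E_{p}))$ and $T\circ\pi_{p}^{*}=0$ for all $p$, we obtain $T=0$, which proves (a). For part (b), linear independence is finitary, so I would suppose $\sum_{i=1}^{n}c_{i}e^{\phi_{i}}=0$ with the $\phi_{i}\in E^{\prime}$ pairwise distinct and aim for $c_{i}=0$. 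Choosing $a\in E$ outside the finitely many proper subspaces $\ker(\phi_{i}-\phi_{j})$, $i\neq j$ (possible since a complex vector space is not a finite union of proper subspaces), makes the scalars $\lambda_{i}:=\phi_{i}(a)$ pairwise distinct; restricting to the complex line $t\mapsto ta$ yields $\sum_{i=1}^{n}c_{i}e^{\lambda_{i}t}=0$ for all $t\in\mathbb{C}$, and the classical independence of one-variable exponentials with distinct exponents (via a Vandermonde/Wronskian argument) forces all $c_{i}=0$.

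The step I expect to be the main obstacle is the heart of part (a): verifying that $\mathcal{B}T$ is a genuine holomorphic function on the connected space $E^{\prime}_{b}$ so that the identity principle legitimately applies, and then transferring the resulting conclusion back through the inductive limit, where injectivity of the Borel transform is available only on the normed pieces $E_{p}$. Part (b) is routine by comparison.
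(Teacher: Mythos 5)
Your proof is correct, but it takes a genuinely different route from the paper's. The paper proves both parts by immediate reduction to the normed pieces: for (a) it pulls $U$ back through the transposes $\pi_p^{\prime}\colon (E_p)^{\prime}_b \to E^{\prime}_b$, invokes \cite[Proposition 4.3]{BBFJ} to get density of ${\rm span}\,\{e^{\phi_p}: \phi_p \in (\pi_p^{\prime})^{-1}(U)\}$ in each $\mathcal{H}_{\Theta b}(E_p)$, and concludes from $\mathcal{H}_{u\Theta b}(E)=\bigcup_{p\in D}\pi_p^{*}(\mathcal{H}_{\Theta b}(E_p))$; for (b) it cites \cite[Proposition 4.6]{BBFJ} together with injectivity of each $\pi_p^{*}$. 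You instead argue at the top level: Hahn--Banach, then the identity principle on $E^{\prime}_b$ to upgrade vanishing on $U$ to vanishing on all of $E^{\prime}$, and only then descend to the pieces, where injectivity of the Borel transform \cite[Theorem 2.1]{favaro_jatoba} finishes; your (b) is a self-contained classical argument via restriction to a complex line, bypassing \cite[Proposition 4.6]{BBFJ} altogether. In effect you re-prove at the locally convex level the machinery that the cited propositions encapsulate in the normed setting, so your proof is longer but more self-contained; it also buys something concrete. In the paper's proof the set $U_p=(\pi_p^{\prime})^{-1}(U)$ can actually be empty for a fixed $p$: for $E=\mathbb{C}^{\mathbb{N}}$ with $p_n(x)=\sup_{j\le n}|\xi_j|$, the set $U=\{\phi \in E^{\prime}: |\phi(e_{n+1})-1|<1/2\}$ is open in $E^{\prime}_b$ and nonvoid, yet disjoint from $\pi_{p_n}^{\prime}((E_{p_n})^{\prime}_b)$, since every $\phi$ factoring through $\pi_{p_n}$ vanishes at $e_{n+1}$; nonvoidness of $U_p$ holds only for $p$ in a cofinal subset of $D$, which is what the paper's argument really needs. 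Your ``upgrade first, descend later'' scheme kills every piece at once and has no such cofinality issue. Finally, the obstacle you flag, namely holomorphy of $\mathcal{B}T$ on $E^{\prime}_b$, is lighter than you fear: within the paper's framework Definition \ref{def-borel} already places $\mathcal{B}T$ in $\mathcal{H}(E^{\prime}_b)$, and in any case your identity-principle step (vanishing on a nonvoid open subset of the \emph{whole} space) only requires $\mathcal{B}T$ to be entire along complex lines. That follows because any $\phi_0,\psi\in E^{\prime}$ factor through a common $E_p$ (as $D$ is directed and fundamental), so that $\lambda\mapsto \mathcal{B}T(\phi_0+\lambda\psi)$ coincides with the restriction of $\mathcal{B}_p T_p\in Exp_{\Theta^{\prime}}((E_p)^{\prime}_b)$, $T_p=T\circ\pi_p^{*}$, to a line, which is entire by \cite[Theorem 2.1]{favaro_jatoba}. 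With that observation supplied, your argument is complete.
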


\begin{proof} 
(a) Let $U$ be a nonvoid open subset of $E^\prime$. For each $p \in D$ consider the mapping
$$\pi_p^{\prime}\colon \phi_p\in (E_p)^\prime_b\rightarrow \phi_p\circ\pi_p\in E^{\prime}_b$$
and observe that $\pi_{p}^{\prime}((E_{p})^{\prime}_b) \subset E^{\prime}_b$ and $\pi_{p}^{\prime}$
is continuous. Let $U_{p} = (\pi_{p}^{\prime})^{-1}(U)$. Then $U_{p}$ is a nonvoid open subset of $(E_{p})^{\prime}_b$. Let
$$ S_{U_{p}} = {\rm span } \{e^{\phi_{p}}: \phi_{p} \in U_{p} \}. $$
By \cite[Proposition 4.3]{BBFJ} $S_{U_{p}}$ is a dense subspace of $\mathcal{H}_{\Theta b}(E_{p})$.
Since $$\mathcal{H}_{u \Theta b}(E) = \bigcup_{p \in D} \pi_{p}^{*}(\mathcal{H}_{\Theta b}(E_{p}))$$  it follows that 
$S_{U}$ is a dense subspace of $\mathcal{H}_{u \Theta b}(E)$.

\medskip

(b) If we set
$$ B_{p} = \{ e^{\phi_{p}}: \phi_{p} \in (E_{p})^{\prime}_b \} $$ 
for every $p \in D$, then it is clear that $B = \bigcup_{p \in D} \pi_{p}^{*}(B_{p})$. 
By \cite[Proposition 4.6]{BBFJ}, each $B_{p}$ is a linearly independent subset of $\mathcal{H}_{\Theta b}(E_{p})$. 
Since each $\pi_{p}^{*}$ is injective, 
it follows that $B$ is a linearly independent subset of $\mathcal{H}_{u \Theta b}(E)$.
\end{proof}

\begin{lemma}
\label{lemma_hypercyclic} Let $E$ be a locally convex space, and assume
there is a fundamental family $D \subset cs(E)$ such that the sequence
$(\mathcal{P}_{\Theta}(^{m}E_{p}))_{m=0}^{\infty}$ is a $\pi_{1}$-holomorphy type for every $p \in D$.
Let $L$ be a convolution operator on $\mathcal{H}_{u \Theta b}(E)$. Then:

\textit{(a) $L(e^{\phi}) = \mathcal{B}(\Gamma L)(\phi) e^{\phi}$ for every $\phi \in E^{\prime}_b$.} 

\textit{(b) $L$ is a scalar multiple of the identity operator if and only if the entire function
$\mathcal{B}(\Gamma L)\colon E^{\prime}_b \rightarrow \mathbb{C}$ is constant.}
\end{lemma}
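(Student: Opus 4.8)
The plan is to prove (a) first by exploiting the defining commutation relation of a convolution operator against the exponentials $e^{\phi}$, and then to deduce (b) from (a) together with the density statement in Proposition \ref{exp}(a). First I would record that each $e^{\phi}$ genuinely lies in $\mathcal{H}_{u \Theta b}(E)$: writing $\phi = \phi_{p} \circ \pi_{p}$ with $\phi_{p} \in (E_{p})^{\prime}_b$, the estimate (a1) for the $\pi_{1}$-holomorphy type gives $\|\phi_{p}^{m}\|_{\Theta} \le K^{m}\|\phi_{p}\|^{m}$, so that $\|e^{\phi_{p}}\|_{\Theta,\rho} \le e^{\rho K\|\phi_{p}\|} < \infty$ for every $\rho$; hence $e^{\phi_{p}} \in \mathcal{H}_{\Theta b}(E_{p})$ and $e^{\phi} = \pi_{p}^{*}(e^{\phi_{p}}) \in \mathcal{H}_{u \Theta b}(E)$, so that $L(e^{\phi})$ makes sense.

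For (a), the key observation is that translation acts on an exponential by a scalar: since $\phi$ is linear, $(\tau_{a}e^{\phi})(x) = e^{\phi(x-a)} = e^{-\phi(a)}e^{\phi}(x)$, i.e. $\tau_{a}e^{\phi} = e^{-\phi(a)}e^{\phi}$. Applying $L$ and using both its linearity and the commutation relation $L(\tau_{a}f) = \tau_{a}(Lf)$ yields
$$ \tau_{a}(L e^{\phi}) = L(\tau_{a}e^{\phi}) = e^{-\phi(a)}L(e^{\phi}). $$
Evaluating this identity of functions at the point $a$, and using $(\tau_{a}g)(a) = g(0)$, gives $(Le^{\phi})(0) = e^{-\phi(a)}(Le^{\phi})(a)$, that is $(Le^{\phi})(a) = e^{\phi(a)}(Le^{\phi})(0)$ for every $a \in E$. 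Since $(Le^{\phi})(0) = (\Gamma L)(e^{\phi}) = \mathcal{B}(\Gamma L)(\phi)$ by the definitions of $\Gamma$ and $\mathcal{B}$, this is exactly $L(e^{\phi}) = \mathcal{B}(\Gamma L)(\phi)\, e^{\phi}$.

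For (b), the forward direction is immediate: if $L = \lambda I$ then $\mathcal{B}(\Gamma L)(\phi) = (Le^{\phi})(0) = \lambda e^{\phi}(0) = \lambda$ for every $\phi$, a constant. For the converse, suppose $\mathcal{B}(\Gamma L) \equiv \lambda$; then (a) gives $L(e^{\phi}) = \lambda e^{\phi}$ for every $\phi \in E^{\prime}_b$, so $L$ and $\lambda I$ agree on ${\rm span}\{e^{\phi} : \phi \in E^{\prime}\}$, which is the subspace $S_{U}$ of Proposition \ref{exp}(a) for $U = E^{\prime}_b$. By that proposition $S_{U}$ is dense in $\mathcal{H}_{u \Theta b}(E)$, and since $L$ and $\lambda I$ are continuous and the space is Hausdorff, they coincide everywhere, i.e. $L = \lambda I$.

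The computations are short, so I expect no real obstacle; the only points requiring care are the membership $e^{\phi} \in \mathcal{H}_{u \Theta b}(E)$ and the bookkeeping of the translation action together with the two definitions feeding into $\mathcal{B}(\Gamma L)(\phi) = (Le^{\phi})(0)$. The one substantive input — density of the span of exponentials in the inductive-limit topology — has already been isolated in Proposition \ref{exp}(a), which reduces the nontrivial half of (b) to a routine continuity-and-density argument.
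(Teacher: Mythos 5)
Your proposal is correct and follows essentially the same route as the paper: part (a) is the same computation combining linearity of $L$, the commutation $L\circ\tau_{a}=\tau_{a}\circ L$, and the identities $\mathcal{B}(\Gamma L)(\phi)=(\Gamma L)(e^{\phi})=(Le^{\phi})(0)$ (the paper translates by $-y$ and evaluates at $0$, you translate by $a$ and evaluate at $a$, which is the same identity read backwards), and part (b) is the same density argument via Proposition \ref{exp}(a). Your extra verification that $e^{\phi}\in\mathcal{H}_{u\Theta b}(E)$ via the estimate from condition (a1) is a detail the paper leaves implicit, and is a welcome addition.
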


\begin{proof}
(a) If $\phi \in E^{\prime}_b$, then it follows from Definitions \ref{def-borel} and \ref{def-convolution} that
$$ \mathcal{B}(\Gamma L)(\phi) = (\Gamma L)(e^{\phi}) = L(e^{\phi})(0). $$ 
Hence for each $y \in E$ it follows that
$$ [\mathcal{B} (\Gamma L) (\phi) e^{\phi}](y)  = \mathcal{B} (\Gamma L) (\phi) e^{\phi(y)} = e^{\phi(y)} (\Gamma L) (e^{\phi})  $$
$$ = e^{\phi(y)} (L e^{\phi})(0) = [L(e^{\phi(y)} e^{\phi})](0) = [L(\tau_{-y}e^{\phi})](0) $$
$$ = [\tau_{-y}(Le^{\phi})](0) = L(e^{\phi})(y). $$

(b) Let $\lambda \in \mathbb{C}$ such that $\mathcal{B}(\Gamma L)(\phi) = \lambda$ for every $\phi \in E^{\prime}_b$.
It follows from (a) that
$$ L e^{\phi} = \mathcal{B} (\Gamma L) (\phi) e^{\phi} = \lambda e^{\phi} \hspace{.1in} \mbox{for every $\phi \in E^{\prime}_b$}. $$ 
Since $ {\rm span \ } \{e^{\phi}: \phi \in E^{\prime} \}$ is dense in $\mathcal{H}_{u \Theta b}(E)$, it follows that
$Lf = \lambda f$ for every $f \in \mathcal{H}_{u \Theta b}(E)$.

Conversely let $\lambda \in \mathbb{C}$ such that $Lf = f$ for every $f \in \mathcal{H}_{u \Theta b}(E)$.
It follows from (a) that
$$ \lambda e^{\phi} = L e^{\phi} = \mathcal{B} (\Gamma L) (\phi) e^{\phi}, $$ 
and therefore $\mathcal{B} (\Gamma L) (\phi) = \lambda$ for every $\phi \in E^{\prime}_b$.
\end{proof}

\begin{proposition}
\label{prop_hypercyclicity}
Let $E$ be a locally convex space and assume 
there is a fundamental family $D \subset cs(E)$ such that the sequence
$(\mathcal{P}_{\Theta}(^{m}E_{p}))_{m=0}^{\infty}$ is a $\pi_{1}$-holomorphy type for every $p \in D$.
Let $L$ be a nontrivial convolution operator on $\mathcal{H}_{u \Theta b}(E)$. Consider the sets 
$$ V = \{\phi \in E^{\prime}_b: |\mathcal{B}(\Gamma L)(\phi)| < 1 \} = [\mathcal{B}(\Gamma L)]^{-1}(\Delta) $$ and
$$ W = \{\phi \in E^{\prime}_b: |\mathcal{B}(\Gamma L)(\phi)| > 1 \} = [\mathcal{B}(\Gamma L)]^{-1}(\mathbb{C} \setminus \overline{\Delta}). $$ 
Consider also the sets
$$ H_{V} = {\rm span  } \{e^{\phi}: \phi \in V \} \hspace{.1in} and \hspace{.1in} H_{W} = {\rm span  } \{e^{\phi}: \phi \in W \}. $$ Then:

\textit{(a) $H_{V}$ and $H_{W}$ are dense subspaces of $\mathcal{H}_{u \Theta b}(E)$.} 

\textit{(b) $L^{n}f \rightarrow 0$ when $n \rightarrow \infty$ for each $f \in H_{V}$.} 

\textit{(c) If we define
$$ S(e^{\phi}) = \frac{ e^{\phi} }{ \mathcal{B}(\Gamma L)(\phi) } \hspace{.1in} \mbox{ for every $\phi \in W$ }, $$ 
then $S$ admits a unique extension to a linear mapping $S\colon H_{W} \rightarrow H_{W}$, and 
$S^{n}f \rightarrow 0$ when $n \rightarrow \infty$ for each $f \in H_{W}$.}

\textit{(d) $L \circ S (f) = f$ for every $f \in H_{W}$.} 
\end{proposition}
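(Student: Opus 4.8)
The plan is to reduce everything to the eigenvector relation $L(e^{\phi}) = \mathcal{B}(\Gamma L)(\phi)\,e^{\phi}$ furnished by Lemma \ref{lemma_hypercyclic}(a), which converts the action of $L$ (and of the candidate right inverse $S$) on exponentials into elementary scalar dynamics governed by the entire function $g := \mathcal{B}(\Gamma L)$. Write $\mu_{\phi} := g(\phi)$, so that $V = \{\phi : |\mu_{\phi}| < 1\}$ and $W = \{\phi : |\mu_{\phi}| > 1\}$. Throughout I would use that $g$ is entire on $E^{\prime}_b$ (Definition \ref{def-borel}), hence continuous, and that, since $L$ is nontrivial, $g$ is non-constant by Lemma \ref{lemma_hypercyclic}(b). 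Parts (b), (c), (d) are then formal consequences of these two facts together with the linear independence of exponentials; the real work is in (a).

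For part (a), first note that $V$ and $W$ are open, being the preimages under the continuous map $g$ of the open sets $\Delta$ and $\mathbb{C}\setminus\overline{\Delta}$. The substantive point is that both are nonempty. I would reduce to one complex variable: since $g$ is non-constant, choose $\phi_{0},\phi_{1}\in E^{\prime}_b$ with $g(\phi_{0})\neq g(\phi_{1})$ and set $\psi = \phi_{1}-\phi_{0}$, so that $h(\lambda) := g(\phi_{0}+\lambda\psi)$ is a non-constant entire function of $\lambda\in\mathbb{C}$. By Liouville's theorem $h$ is unbounded, so $|h(\lambda_{1})| > 1$ for some $\lambda_{1}$, giving a point $\phi_{0}+\lambda_{1}\psi\in W$; and if $|h|$ were $\geq 1$ everywhere, then $1/h$ would be a bounded entire function, hence constant, forcing $h$ constant, a contradiction, so $|h(\lambda_{0})| < 1$ for some $\lambda_{0}$, giving a point of $V$. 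Thus $V$ and $W$ are nonvoid open subsets of $E^{\prime}_b$, and since $H_{V}$ and $H_{W}$ are precisely $S_{V}$ and $S_{W}$ in the notation of Proposition \ref{exp}, part (a) of that proposition yields their density.

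For (b), any $f\in H_{V}$ is a finite sum $\sum_{j} c_{j} e^{\phi_{j}}$ with $\phi_{j}\in V$, and iterating Lemma \ref{lemma_hypercyclic}(a) gives $L^{n}f = \sum_{j} c_{j}\,\mu_{\phi_{j}}^{\,n}\, e^{\phi_{j}}$; since each $|\mu_{\phi_{j}}| < 1$ the scalar coefficients tend to $0$, so $L^{n}f\to 0$ in $\mathcal{H}_{u\Theta b}(E)$, being a finite combination of fixed vectors with vanishing coefficients. For (c), the family $\{e^{\phi}:\phi\in W\}$ is linearly independent by Proposition \ref{exp}(b), so $S$ is well defined on it and extends uniquely to a linear map $H_{W}\to H_{W}$ (note $\mu_{\phi}\neq 0$ on $W$, and $S(e^{\phi})=\mu_{\phi}^{-1}e^{\phi}\in H_{W}$); the same computation as in (b), now with $|\mu_{\phi_{j}}| > 1$, gives $S^{n}f = \sum_{j} c_{j}\,\mu_{\phi_{j}}^{-n}\, e^{\phi_{j}}\to 0$. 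Finally (d) is checked on the spanning exponentials, $L(S(e^{\phi})) = \mu_{\phi}^{-1}L(e^{\phi}) = \mu_{\phi}^{-1}\mu_{\phi}\,e^{\phi} = e^{\phi}$, and linearity propagates this to all of $H_{W}$.

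The only genuinely nontrivial step is the nonemptiness of $V$ and $W$ in part (a): once the problem is restricted to a complex line it is a two-line Liouville argument, but the crux is recognizing that a non-constant entire function on $E^{\prime}_b$ must take values both inside and outside the closed unit disk, which is exactly what forces the splitting of the dynamics into a contracting part (on $H_{V}$) and an expanding part (on $H_{W}$) required by the mixing criterion.
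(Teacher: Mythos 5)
Your proof is correct and follows essentially the same route as the paper's: both rest on Lemma \ref{lemma_hypercyclic} to reduce $L$ and $S$ to scalar dynamics on exponentials, and on Proposition \ref{exp} for density and linear independence. The only difference is that you supply the Liouville-type argument on a complex line to show $V$ and $W$ are nonvoid, a detail the paper states without proof; this is a welcome filling-in rather than a different approach.
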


\begin{proof}
(a) Since $L$ is not a scalar multiple of the identity, Lemma \ref{lemma_hypercyclic}(b) implies that the entire function
$\mathcal{B}(\Gamma L)\colon E^{\prime}_b \rightarrow \mathbb{C}$ is not constant. Hence $V$ and $W$ are nonvoid open subsets 
of $E^{\prime}_b$. By Proposition \ref{exp}(a) $H_{V}$ and $H_{W}$ are dense subspaces of $\mathcal{H}_{u \Theta b}(E)$. 

(b) Given $\phi \in V$, Lemma \ref{lemma_hypercyclic}(a) implies that
$$ L(e^{\phi}) = \mathcal{B}(\Gamma L)(\phi)e^{\phi} \in H_{V}. $$ 
Since $L$ is linear, it is clear that $L(H_{V}) \subset H_{V}$. It is easy to see that
$$ L^{n}(e^{\phi}) = [\mathcal{B}(\Gamma L)(\phi)]^{n}e^{\phi} \hspace{.1in} \mbox{for every $\phi \in V$, $n \in \mathbb{N}$}. $$ 
Now let $f \in H_{V}$, that is $f = \sum_{j=1}^{m} \alpha_{j} e^{\phi_{j}}$, with $\alpha_{j} \in \mathbb{C}$ and 
$\phi_{j} \in V$. It follows that 
$$ L_{n}(f) = \sum_{j=1}^{m} \alpha_j L^{n}(e^{\phi_{j}}) = \sum_{j=1}^{m} \alpha_{j} [\mathcal{B}(\Gamma L)(\phi_{j})]^{n}e^{\phi_{j}}. $$ 
Since $|\mathcal{B}(\Gamma L)(\phi_{j})|<1$ for every $j=1,\ldots,m$, it follows that $L^{n}f \rightarrow 0$ when $n \rightarrow \infty$. 

(c) If $\phi \in W$, then $\mathcal{B}(\Gamma L)(\phi) \ne 0$. Hence we may define
$$ S(e^{\phi}) = \frac{e^{\phi}}{\mathcal{B}(\Gamma L)(\phi)} \in H_{W}. $$          
It is easy to see that
$$ S^{n}(e^{\phi}) = \frac{e^{\phi}}{[\mathcal{B}(\Gamma L)(\phi)]^{n}} \hspace{.1in} \mbox{for every $\phi \in W$, $n \in \mathbb{N}$}. $$     
By Proposition \ref{exp}(b) $\{e^{\phi}: \phi \in W \}$ is a Hamel basis of $H_{W}$. Hence $S$ admits a unique extension
to a linear mapping $S\colon H_{W} \rightarrow H_{W}$. Now let $f \in H_{W}$, that is $f = \sum_{j=1}^{m} \alpha_{j} e^{\phi_{j}}$,
with $\alpha_{j} \in \mathbb{C}$ and $\phi_{j} \in W$. It follows that
$$ S^{n}f = \sum_{j=1}^{m} \frac{ \alpha_{j} e^{\phi_{j}} }{[\mathcal{B}(\Gamma L)(\phi_{j})]^{n}}. $$
Since $|\mathcal{B}(\Gamma L))(\phi_{j})| > 1$ for every $j$, it follows that $S^{n}f \rightarrow 0$ when $n \rightarrow \infty$.

(d) It is clear that $L \circ S (e^{\phi}) = e^{\phi}$ for every $\phi \in W$, and therefore $L \circ S (f) = f$ for every 
$f \in H_{W}$.
\end{proof}

\textit{Proof of Theorem \ref{theo_hypercyclic}.} By \cite[Theorem 6.5]{colombeau_mujica}, $\mathcal{H}(E) = \mathcal{H}_{u N b}(E)$ algebraically and topologically. By a result of Boland \cite[Corollary 1.4]{Bo}, $\mathcal{H}(E)$ is a Fr\'echet nuclear space.
In particular $\mathcal{H}_{u N b}(E)$ is a separable Fr\'echet space. 
If $D \subset cs(E)$ is any fundamental family, then the sequence $(\mathcal{P}_{N}(^{m}E_{p}))_{m=0}^{\infty}$ is a $\pi_{1}$-holomorphy type for every $p \in D$, by Example \ref{nuclear}.
By Proposition \ref{prop_hypercyclicity} $H_{V}$ and $H_{W}$ are dense subspaces of $\mathcal{H}_{uNb}(E)$, and there is a linear mapping
$S\colon H_{W} \rightarrow H_{W}$ such that

(a) $L^{n}f \rightarrow 0$ when $n \rightarrow \infty$ for every $f \in H_{V}$;

(b) $S^{n}f \rightarrow 0$ when $n \rightarrow \infty$ for every $f \in H_{W}$;

(c) $L \circ S (f) = f$ for every $f \in H_{W}$.

By Theorem \ref{criterion} the operator $L$ is mixing.   \hfill$\square$

\section{Convolution operators on spaces of entire functions on $(DFC)$-spaces}

A $(DFC)$-space is a locally convex space of the form $E=F_c^{\prime}$, where $F$ is a Fr\'echet space. $(DFC)$-spaces were first studied by Brauner \cite{brauner} and H\"ollstein \cite{holl1,holl2}. Holomorphic functions on $(DFC)$-spaces have been studied by Mujica \cite{mujica_dfc}, Valdivia \cite{valdivia}, Schottenloher \cite{schot}, Nachbin \cite{nachbin4}, Louren\c co \cite{lilian} and Galindo et al. \cite{GGM}.

The main result in this section is the following theorem.

\begin{theorem} 
\label{theo_dfc}
Let $E = F^{\prime}_{c}$, where $F$ is a separable Fr\'echet space with the approximation property.
Let $L$ be a nontrivial convolution operator on $(\mathcal{H}(E), \tau_{0})$. Then $L$ is mixing, in particular hypercyclic.
\end{theorem}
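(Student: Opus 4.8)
The plan is to mirror the proof of Theorem~\ref{theo_hypercyclic} but replace the identification $\mathcal{H}(E)=\mathcal{H}_{uNb}(E)$ of the $(DFN)$ case with an analogous identification adapted to the $(DFC)$ setting. The key structural fact I would want is that when $E=F'_c$ with $F$ a separable Fr\'echet space having the approximation property, the space $(\mathcal{H}(E),\tau_0)$ coincides, algebraically and topologically, with $\mathcal{H}_{u\Theta b}(E)$ for a suitable holomorphy type $\Theta$. The natural candidate here is the \emph{approximable} holomorphy type from Example~\ref{nuclear}(b): the approximation property of $F$ should guarantee that holomorphic functions on $E=F'_c$ are locally uniform limits of finite-type polynomials, so that $\mathcal{H}(E)=\mathcal{H}_{uAb}(E)$. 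This is exactly the role played by \cite[Theorem 6.5]{colombeau_mujica} in the previous section, and I expect there is a corresponding result in the references on holomorphy on $(DFC)$-spaces (Mujica~\cite{mujica_dfc}, Galindo et al.~\cite{GGM}) supplying this identification.

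Once that identification is in hand, the remainder of the argument is essentially a transcription. First I would verify the three hypotheses needed to invoke the abstract machinery: that $(\mathcal{H}(E),\tau_0)$ is a separable Fr\'echet space, and that there is a fundamental family $D\subset cs(E)$ with $(\mathcal{P}_A(^mE_p))_{m=0}^\infty$ a $\pi_1$-holomorphy type for each $p\in D$. Separability should follow from the separability of $F$, and the $\pi_1$-property for the approximable type is already recorded in Example~\ref{nuclear}(b). With these in place, Proposition~\ref{prop_hypercyclicity} applies verbatim: setting $V=[\mathcal{B}(\Gamma L)]^{-1}(\Delta)$ and $W=[\mathcal{B}(\Gamma L)]^{-1}(\mathbb{C}\setminus\overline{\Delta})$, the spaces $H_V$ and $H_W$ are dense, $L^nf\to0$ on $H_V$, the right inverse $S$ satisfies $S^nf\to0$ on $H_W$, and $L\circ S=\mathrm{id}$ on $H_W$. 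Nontriviality of $L$ is what makes $\mathcal{B}(\Gamma L)$ nonconstant via Lemma~\ref{lemma_hypercyclic}(b), so $V$ and $W$ are nonvoid and open.

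Finally I would apply the Costakis--Sambarino criterion, Theorem~\ref{criterion}, with $Z=H_V$, $Y=H_W$, and the mapping $S\colon H_W\to H_W$, concluding that $L$ is mixing and hence hypercyclic.

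The main obstacle I anticipate is purely the identification $(\mathcal{H}(E),\tau_0)=\mathcal{H}_{uAb}(E)$: I need both the algebraic equality (every $\tau_0$-holomorphic function on $F'_c$ factors as $f_p\circ\pi_p$ with $f_p$ of approximable bounded type on the normed quotient $E_p$) and the topological equality (the compact-open topology matches the inductive-limit topology of the $\mathcal{H}_{Ab}(E_p)$). The approximation property of $F$ is precisely the hypothesis that should force finite-type approximation, but matching the two topologies — in particular checking that $\tau_0$ on $\mathcal{H}(F'_c)$ is genuinely Fr\'echet and agrees with the inductive topology — is the delicate point, and it is where the full strength of $F$ being a \emph{separable} Fr\'echet space with the approximation property is presumably consumed. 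The fact that the authors single out the $(DFC)$ case as requiring the more flexible holomorphy-type framework suggests this identification is the technical heart of the section; once granted, everything downstream is the same abstract argument as in Theorem~\ref{theo_hypercyclic}.
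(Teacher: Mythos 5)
Your overall strategy is exactly the paper's: identify $(\mathcal{H}(E),\tau_0)$ with $\mathcal{H}_{uAb}(E)$, check separability and the $\pi_1$-property of the approximable type, then run Proposition \ref{prop_hypercyclicity} and conclude with Theorem \ref{criterion}. The gap is in the step you defer to the literature. The identification you ``expect'' to find in Mujica \cite{mujica_dfc} or Galindo et al.\ \cite{GGM} does not exist there in the form you need. What \cite[Corollary 1]{GGM} provides is $(\mathcal{H}(E;\ell_\infty(I)),\tau_0)=\mathcal{H}_{ub}(E;\ell_\infty(I))$, i.e.\ the identification with the \emph{current} holomorphy type. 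That is not enough for your argument: the current type is not a $\pi_1$-holomorphy type on the quotients $E_p$ (finite-type polynomials need not be dense in $\mathcal{P}(^mE_p)$, so condition (a2) of Definition \ref{pi-tipo de holomorfia} fails), and Proposition \ref{prop_hypercyclicity} therefore cannot be applied to $\mathcal{H}_{ub}(E)$. Passing from $\mathcal{H}_{ub}(E)$ to $\mathcal{H}_{uAb}(E)$ is precisely where the approximation property is consumed, and it is the paper's own contribution (Theorem \ref{theo_Huab}), not a citation.

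Concretely, the paper proves that identification in three steps: (i) a new lemma showing that if $T\in\mathcal{L}(F;E)$ is an approximable operator, then $f\mapsto f\circ T$ maps $\mathcal{H}_{b}(E;G)$ into $\mathcal{H}_{Ab}(F;G)$ continuously (this requires a Newton-binomial estimate to show $P_m\circ T$ is an approximable polynomial); (ii) Louren\c{c}o's lemma \cite[Lemma 2.2]{lilian}, which uses the approximation property to produce, for each $p\in cs(E)$, a $q\geq p$ such that the linking map $\pi_{pq}\colon E_q\to E_p$ is approximable; and (iii) a bornological argument in which a bounded subset $\{f_i:i\in I\}$ of $(\mathcal{H}(E),\tau_0)$ is encoded as a single $\ell_\infty(I)$-valued map, factored through some $\mathcal{H}_{b}(E_p;\ell_\infty(I))$ via \cite{GGM}, and then pushed into $\mathcal{H}_{Ab}(E_q;\ell_\infty(I))$ by composing with $\pi_{pq}$; since both spaces are bornological, this yields the topological equality $\mathcal{H}_{uAb}(E)=(\mathcal{H}(E),\tau_0)$. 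You correctly located this identification as the technical heart and correctly guessed its statement, but as written your proof rests on a reference that does not supply it. (A minor additional imprecision: separability of $(\mathcal{H}(E),\tau_0)$ uses not only separability of $F$ but also the approximation property and the Mackey--Arens identification $E^{\prime}_c=F$, as in Proposition \ref{HE_frechet}. Everything downstream of these two inputs is indeed verbatim the argument of Theorem \ref{theo_hypercyclic}.)
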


The proof of Theorem \ref{theo_dfc} rests on Theorem \ref{criterion}, but before proving the theorem we need some auxiliary results.

\begin{proposition}
Let $E = F^{\prime}_{c}$, where $F$ is a Fr\'echet space. Then:

\textit{(a) $E$ is a semi-Montel, hemicompact k-space.}

\textit{(b) $(\mathcal{H}(E), \tau_{0})$ is a Fr\'echet space.} \vspace{.1in}
\end{proposition}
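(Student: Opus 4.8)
The plan is to read off all three topological properties in (a) from the standard duality description of $E=F^{\prime}_c$, and then to deduce (b) from them. Since $F$ is metrizable, I first fix a decreasing base $(U_n)_n$ of absolutely convex neighbourhoods of $0$ in $F$ and set $K_n=U_n^{\circ}\subset F^{\prime}=E$. Then $(K_n)$ is an increasing sequence of equicontinuous sets whose union is all of $E$, and by the Alaoglu--Bourbaki theorem each $K_n$ is $\sigma(F^{\prime},F)$-compact; since on equicontinuous sets the topology $\tau_c$ of compact convergence agrees with $\sigma(F^{\prime},F)$ (Ascoli), each $K_n$ is in fact $\tau_c$-compact. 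This sequence is the backbone of the whole argument, and I would establish it before anything else.

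For the semi-Montel property I would use that $F$, being Fr\'echet, is barrelled, so that by the Banach--Steinhaus theorem every $\sigma(F^{\prime},F)$-bounded subset of $F^{\prime}$ is equicontinuous. As $\tau_c$ is finer than $\sigma(F^{\prime},F)$, a $\tau_c$-bounded set $B$ is $\sigma(F^{\prime},F)$-bounded, hence equicontinuous, hence contained in some $K_n$; its $\tau_c$-closure is then a closed subset of the compact set $K_n$ and so is compact. Thus bounded sets are relatively compact. Hemicompactness falls out of the same chain: every $\tau_c$-compact set is bounded, hence equicontinuous, hence contained in some $K_n$, so $(K_n)$ is a fundamental sequence of compact sets.

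The $k$-space property is the step I expect to be the main obstacle, precisely because when $F$ is not separable the compacta $K_n$ need not be metrizable, so sequential arguments are unavailable and one must argue with the final topology itself. I would prove the stronger statement that $E$ is a $k_\omega$-space, i.e. that $\tau_c$ equals the final topology $\tau_f$ induced by the inclusions $K_n\hookrightarrow E$; this gives the $k$-space property at once. The inclusion $\tau_c\subseteq\tau_f$ is immediate. For the converse I would note that $\tau_f$ restricts to the given compact topology on each $K_n$, and therefore, the $K_n$ being cofinal among the equicontinuous sets, agrees with $\sigma(F^{\prime},F)$ on every equicontinuous set; the Banach--Dieudonn\'e theorem says that for metrizable complete $F$ the topology $\tau_c$ is the \emph{finest} topology on $F^{\prime}$ with this property, whence $\tau_f\subseteq\tau_c$ and $\tau_f=\tau_c$. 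Alternatively this is exactly the known fact that a $(DFC)$-space is a $k$-space, which may be quoted from the literature on $(DFC)$-spaces cited above.

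Finally, for (b) I would extract the two defining properties of a Fr\'echet space from (a). Metrizability comes from hemicompactness: the countable family of seminorms $f\mapsto\sup_{K_n}|f|$ is cofinal for $\tau_0$, since each compact set lies in some $K_n$, so $\tau_0$ is generated by countably many seminorms. Completeness comes from the $k$-space property: a $\tau_0$-Cauchy sequence $(f_j)$ converges uniformly on each compact set to a function $f$, which is continuous because $E$ is a $k$-space and $f$ is continuous on every compact set, and which is G\^ateaux-holomorphic because for all $a,b\in E$ the entire functions $\lambda\mapsto f_j(a+\lambda b)$ converge uniformly on compact subsets of $\mathbb{C}$, so that Weierstrass's theorem yields holomorphy of the limit. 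A continuous, G\^ateaux-holomorphic function on a locally convex space is holomorphic, hence $f\in\mathcal{H}(E)$ and the Cauchy sequence converges in $\mathcal{H}(E)$; being metrizable and complete, $(\mathcal{H}(E),\tau_0)$ is a Fr\'echet space. The semi-Montel property, although recorded here as part of (a), is not strictly needed for (b) itself but will be used later; it can also be invoked to quote the completeness of $(\mathcal{H}(E),\tau_0)$ for $k$-spaces directly from the $(DFC)$ literature.
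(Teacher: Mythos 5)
Your proposal is correct and follows essentially the same route as the paper: the paper obtains the semi-Montel and hemicompact properties by citing Mujica's earlier work on $(DFC)$-spaces, gets the $k$-space property from the Banach--Dieudonn\'e theorem (exactly your key step), and states that (b) ``follows at once.'' You have simply supplied, correctly, the standard details behind those citations --- the polars $K_n=U_n^{\circ}$ via Alaoglu--Bourbaki and Banach--Steinhaus for (a), and metrizability-plus-completeness (via the $k$-space property and Weierstrass) for (b).
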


\begin{proof}
 (a) By \cite[Proposition 7.2]{mujica_dfc} $E$ is a semi-Montel, hemicompact space. By the Banach-Dieudonn\'e theorem  
(see \cite[p. 245, Theorem 1]{horvath}) $E$ is a $k$-space. (b) follows at once from (a).
\end{proof}

If $E = F^{\prime}_{c}$, where $F$ is a Fr\'echet space, then a result of Schwartz guarantees that $F$ has the approximation property
if and only if $E$ has the approximation property (see \cite[Expos\'e $n^{0}$ 14, Th\'eor\`eme 2]{schwartz} or \cite[Corollary 1.3]{DiMu}).

\begin{proposition}
\label{HE_frechet}
 \textit{Let $E = F^{\prime}_{c}$, where $F$ is a separable Fr\'echet space with the approximation property.
Then $(\mathcal{H}(E), \tau_{0})$ is a separable Fr\'echet space.}
\end{proposition}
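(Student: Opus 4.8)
The plan is to realize $(\mathcal{H}(E),\tau_0)$ as a subspace of a countable product of separable Banach spaces, and then use that separability passes to subspaces of separable metrizable spaces. Since the previous proposition already gives that $E$ is hemicompact, I would first fix an increasing fundamental sequence $(K_n)_{n=1}^{\infty}$ of compact subsets of $E$, so that every compact subset of $E$ lies in some $K_n$ and the seminorms $f\mapsto \|f\|_{K_n}=\sup_{x\in K_n}|f(x)|$ generate $\tau_0$. I then consider the restriction map
\[ R\colon (\mathcal{H}(E),\tau_0)\longrightarrow \prod_{n=1}^{\infty}\mathcal{C}(K_n),\qquad R(f)=(f|_{K_n})_n, \]
where each $\mathcal{C}(K_n)$ carries the sup norm. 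Because $\bigcup_n K_n=E$ (each singleton is compact, hence contained in some $K_n$), the map $R$ is injective; and because $(K_n)$ is fundamental, a net converges in the subspace topology inherited from the product exactly when it converges in $\tau_0$, so $R$ is a topological embedding. As a countable product of separable metrizable spaces is separable and metrizable, and every subspace of a separable metrizable space is separable, it suffices to prove that each $\mathcal{C}(K_n)$ is separable, i.e. that each compact set $K_n$ is metrizable.

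The core of the argument is thus to show that compact subsets of $E=F'_c$ are metrizable. Here is the chain I would run. Any compact $K\subset E$ is bounded, hence $\sigma(F',F)$-bounded; since $F$ is a Fréchet space it is barrelled, so by the Banach--Steinhaus theorem $K$ is equicontinuous. On an equicontinuous subset of $F'$ the topology of compact convergence coincides with the weak-star topology $\sigma(F',F)$, so the subspace topology that $K$ inherits from $E=F'_c$ agrees with its weak-star topology. Finally, because $F$ is separable, equicontinuous subsets of $F'$ are weak-star metrizable; hence $K$, and in particular each $K_n$, is metrizable.

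Combining the two steps, each $K_n$ is a compact metrizable space, so $\mathcal{C}(K_n)$ is a separable Banach space, the product $\prod_n \mathcal{C}(K_n)$ is separable and metrizable, and therefore its subspace $R(\mathcal{H}(E))$, which is homeomorphic to $(\mathcal{H}(E),\tau_0)$, is separable. Together with the previous proposition, which gives that $(\mathcal{H}(E),\tau_0)$ is a Fréchet space, this yields the claim. I expect the main obstacle to be precisely the metrizability of the $K_n$: the delicate point is to recognize that compactness forces boundedness, that barrelledness of $F$ upgrades boundedness in $F'$ to equicontinuity, and that on equicontinuous sets the compact-convergence and weak-star topologies coincide, so that separability of $F$ can finally be applied.

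I would also remark that the approximation property, though assumed in the statement (it is the standing hypothesis of this section, needed later for the $\pi_1$-holomorphy type machinery), is not actually required for separability: it is the separability of $F$ that does all the work. An alternative route that does invoke the approximation property would be to prove that the finite-type polynomials are $\tau_0$-dense in $\mathcal{H}(E)$ and then assemble a countable dense set from a countable $\tau_c$-dense subset of $E'=F$ together with coefficients in $\mathbb{Q}+i\mathbb{Q}$, using that $\phi\mapsto\phi^{m}$ is continuous from $E'_c$ into $(\mathcal{P}(^mE),\tau_0)$; but the embedding argument above is shorter and avoids any density theorem for polynomials.
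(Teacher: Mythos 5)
Your proof is correct, but it takes a genuinely different route from the paper's. The paper argues by density: every $f\in\mathcal{H}(E)$ is the $\tau_0$-limit of its Taylor polynomials; the approximation property of $E$ then makes finite-type polynomials $\tau_0$-dense in the continuous polynomials; and since $E^{\prime}_b=E^{\prime}_c=F$ (via Mackey--Arens) is separable, each $(\mathcal{P}_f(^{m}E),\tau_0)$ is separable, whence $(\mathcal{H}(E),\tau_0)$ is separable --- in other words, the paper executes precisely the ``alternative route'' you sketch in your closing remark. Your main argument instead embeds $(\mathcal{H}(E),\tau_0)$ linearly and homeomorphically into the countable product $\prod_{n=1}^{\infty}\mathcal{C}(K_n)$ over a fundamental sequence of compacts furnished by hemicompactness, and reduces everything to the metrizability of compact subsets of $F^{\prime}_c$; your chain for that (compact $\Rightarrow$ bounded $\Rightarrow$ weak-star bounded $\Rightarrow$ equicontinuous by Banach--Steinhaus since $F$ is barrelled; $\tau_c$ agrees with $\sigma(F^{\prime},F)$ on equicontinuous sets; equicontinuous sets are weak-star metrizable when $F$ is separable) is sound, and the remaining topology (compact metrizable $K$ gives $\mathcal{C}(K)$ separable; countable products and subspaces of separable metrizable spaces are separable) is standard. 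What each approach buys: yours is softer and strictly more general, since it shows the approximation property is irrelevant to separability --- separability of $F$ alone suffices --- so the proposition holds for every separable Fr\'echet $F$ (the approximation property is of course still needed later, for the identification $\mathcal{H}_{uAb}(E)=(\mathcal{H}(E),\tau_0)$ and the $\pi_1$-holomorphy type machinery). The paper's argument, though it consumes the approximation property here, yields along the way the $\tau_0$-density of finite-type polynomials, a fact in the same spirit as the density of spans of exponentials on which the mixing proof ultimately rests, so it integrates more tightly with the rest of the section.
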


\begin{proof}
By considering the Taylor series we see that every $f \in \mathcal{H}(E)$ can be approximated, uniformly on compact sets, 
by continuous polynomials on $E$. Since $E$ has the approximation property, it is clear that every continuous polynomial on $E$ can be approximated, 
uniformly on compact sets, by continuous polynomials of finite type. By the Mackey-Arens theorem (see \cite[p. 205, Theorem 1]{horvath}) $E^{\prime}_{b} = E^{\prime}_{c} = F$ is separable, it follows that
$(\mathcal{P}_{f}(^{m}E), \tau_{0})$ is separable for every $m \in \mathbb{N}_{0}$. Hence it follows that $(\mathcal{H}(E), \tau_{0})$
is separable, as asserted.
\end{proof}

\begin{definition}\rm
Let $E$ and $F$ be normed spaces. An operator $T \in \mathcal{L}(E;F)$ is said to be \textit{approximable} if
$T \in \overline{E^{\prime} \otimes F}$.
\end{definition}

\begin{lemma} \textit{Let $E$, $F$ and $G$ be normed spaces, with $G$ complete, and let $T \in \mathcal{L}(F;E)$ be an approximable operator. Then 
$f \circ T \in \mathcal{H}_{Ab}(F;G)$ for every $f \in \mathcal{H}_{b}(E;G)$, and the mapping
$$ f \in \mathcal{H}_{b}(E;G) \rightarrow f \circ T \in \mathcal{H}_{Ab}(F;G) $$ is linear and continuous.}
\end{lemma}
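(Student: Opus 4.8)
The plan is to reduce the whole statement to a single composition fact at the level of Taylor coefficients. Writing $f(x) = \sum_{m=0}^{\infty} P_m(x)$ with $P_m = \frac{1}{m!}\hat d^m f(0) \in \mathcal{P}(^m E;G)$, the map $f \circ T$ is again entire (composition of a holomorphic map with a continuous linear one), and by uniqueness of the monomial expansion its Taylor coefficients at the origin are $\frac{1}{m!}\hat d^m(f \circ T)(0) = P_m \circ T$, where $(P_m \circ T)(y) = P_m(Ty)$. Thus everything is governed by the linear maps $P \mapsto P \circ T$ on the spaces $\mathcal{P}(^m E;G)$, and the lemma splits into two independent claims: that each $P_m \circ T$ is \emph{approximable}, and that the resulting seminorm estimate gives both bounded type and continuity.

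First I would record the trivial norm bound $\|P \circ T\| \le \|T\|^m \|P\|$ for $P \in \mathcal{P}(^m E;G)$, immediate from $\|P(Ty)\| \le \|P\|\,\|T\|^m\|y\|^m$. Summing against the defining seminorms then yields
\[
\|f \circ T\|_{A,\rho} = \sum_{m=0}^{\infty} \rho^m \|P_m \circ T\| \le \sum_{m=0}^{\infty} (\rho\|T\|)^m \|P_m\| = \|f\|_{\rho\|T\|}.
\]
Taking $m$-th roots gives $\limsup_m \|P_m \circ T\|^{1/m} \le \|T\|\limsup_m \|P_m\|^{1/m} = 0$, so $f \circ T$ is of bounded type; and the displayed inequality is precisely the continuity estimate (with comparison radius $\rho\|T\|$ and constant $1$) once I know $f \circ T$ actually lands in $\mathcal{H}_{Ab}(F;G)$. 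Linearity of $f \mapsto f \circ T$ is obvious.

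The genuine content is that each coefficient $P_m \circ T$ lies in $\overline{\mathcal{P}_f(^m F;G)}^{\,\|\cdot\|}$. I would handle this in two moves. If $T$ is of finite rank, say $T = \sum_{i=1}^{N} \psi_i \otimes a_i$ with $\psi_i \in F'$ and $a_i \in E$, then expanding $P_m(Ty) = \check P_m\bigl(\sum_i \psi_i(y)a_i,\dots,\sum_i \psi_i(y)a_i\bigr)$ by multilinearity presents $P_m \circ T$ as a finite $G$-linear combination of products $\psi_{i_1}(y)\cdots\psi_{i_m}(y)$, hence $P_m \circ T \in \mathcal{P}_f(^m F;G)$. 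For a general approximable $T$ I would pick finite-rank $T_k \to T$ in $\mathcal{L}(F;E)$ and show $P_m \circ T_k \to P_m \circ T$ in sup norm: a telescoping of $\check P_m(u,\dots,u) - \check P_m(v,\dots,v)$ into $m$ terms of the form $\check P_m(u,\dots,u,\,u-v,\,v,\dots,v)$, combined with the polarization inequality $\|\check P_m\| \le \frac{m^m}{m!}\|P_m\|$, gives
\[
\|P_m \circ T_k - P_m \circ T\| \le m\,\tfrac{m^m}{m!}\,\|P_m\|\,M^{m-1}\,\|T_k - T\|,
\]
where $M = \sup_k \max(\|T_k\|,\|T\|) < \infty$. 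Since each $P_m \circ T_k$ is of finite type, the sup-norm limit $P_m \circ T$ is approximable, which is what is needed.

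The one step I expect to be the main obstacle is this last approximation: the quantity $\|P_m(u) - P_m(v)\|$ is not linear in $P_m$ and cannot be controlled by $\|u-v\|$ directly, so I must pass through the associated symmetric $m$-linear map and invoke polarization. The constant $\frac{m^m}{m!}\|P_m\|$ appearing there is harmless because $m$ is fixed while taking $k \to \infty$; I only need term-by-term convergence of the coefficients, not a uniform bound in $m$. With approximability of every Taylor coefficient secured and the seminorm estimate $\|f \circ T\|_{A,\rho} \le \|f\|_{\rho\|T\|}$ in hand, the membership $f \circ T \in \mathcal{H}_{Ab}(F;G)$ and the continuity of $f \mapsto f \circ T$ both follow at once.
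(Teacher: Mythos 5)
Your proposal is correct and follows essentially the same route as the paper's proof: both reduce the lemma to the Taylor coefficients $P_m \circ T$, prove their approximability by composing with finite-rank operators $T_n \to T$ and showing $\|P_m \circ T_n - P_m \circ T\| \to 0$, and obtain both the bounded-type property (via Cauchy--Hadamard) and continuity from the estimate $\|f \circ T\|_{\rho} \le \|f\|_{\|T\|\rho}$. The only cosmetic difference is in the convergence estimate for fixed $m$: you telescope $\check P_m(u,\ldots,u)-\check P_m(v,\ldots,v)$ into $m$ single-difference terms and invoke the polarization bound $\|\check P_m\| \le \frac{m^m}{m!}\|P_m\|$, while the paper expands the same difference by the Newton binomial formula and bounds $\|\check P_m\|$ by $e^m\|P_m\|$; the two estimates are interchangeable.
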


\begin{proof}
Let $\sum_{m=0}^{\infty} P_{m}(x)$ denote the Taylor series of $f$ at the origin. Then
$$ f \circ T(y) = \sum_{m=0}^{\infty} P_{m} \circ T(y) \hspace{.1in} \mbox{for every $y \in F$}. $$
Since $T$ is approximable, there is a sequence $(T_{n})_{n=1}^{\infty} \in F^{\prime} \otimes E$
such that $\|T-T_{n}\| \rightarrow 0$. Since $P_{m} \in \mathcal{P}(^{m}E)$ for every $m \in \mathbb{N}_{0}$, it is clear that
$P_{m} \circ T \in \mathcal{P}(^{m}F)$ and $\|P_{m} \circ T\| \le \|P_{m}\| \|T\|^{m}$ for every $m \in \mathbb{N}_{0}$. 

Since $f \in \mathcal{H}_{b}(E;G)$, the Taylor series of $f$ at the origin has an infinite radius of convergence.
By the Cauchy-Hadamard formula (see \cite[Theorem 4.3]{mujica}),  $\|P_{m}\|^{1/m} \rightarrow 0$. Hence it follows that
$$ \|P_{m} \circ T\|^{1/m} \leq \|P_{m}\|^{1/m} \|T\| \rightarrow 0. $$
Hence the Taylor series of $f \circ T$ at the origin has also an infinite radius of convergence, and therefore
$f \circ T \in \mathcal{H}_{b}(F;G)$. 

To show that $f \circ T \in \mathcal{H}_{Ab}(F;G)$ we have to prove that $P_{m} \circ T \in \mathcal{P}_{A}(F;G)$ for every 
$m \in \mathbb{N}_{0}$. By the Newton binomial formula (see \cite[Corollary 1.9]{mujica}), for every $y \in F$ with $\|y\| \le 1$ it follows that
$$ \Vert P_{m} \circ T(y)-P_{m} \circ T_{n}(y)\Vert = \left\| \sum_{k=1}^{m}
\binom{m}{k} \check{P}_{m} (T_{n}y)^{m-k}(Ty-T_{n}y)^{k} \right\| $$
$$ \le \sum_{k=1}^{m} \binom{m}{k}  \|\check{P}_{m}\| \|T_{n}\|^{m-k}\|T-T_{n}\|^{k} $$
$$ \le e^{m} \|P_{m}\| \|T-T_{n}\| \sum_{k=1}^{m} \binom{m}{k}  c^{m-k} d^{k-1} $$ 
 for suitable positive constants $c$ and $d$. Therefore $\| P_{m} \circ T - P_{m} \circ T_{n} \| \rightarrow 0$.
Since $T_{n} \in F^{\prime} \otimes E$, it is clear that
$P_{m} \circ T_{n} \in \mathcal{P}_{f}(^{m}F)$, and therefore $P_{m} \circ T \in \mathcal{P}_{A}(^{m}F;G)$.

Finally it is clear that the mapping $f \in \mathcal{H}_{b}(E;G) \rightarrow f \circ T \in \mathcal{H}_{Ab}(F;G)$ is continuous, since
$$ \|f \circ T\|_{\rho} = \sum_{m=0}^{\infty} \|P_{m} \circ T\|\rho^{m} \le \sum_{m=0}^{\infty} \|P_{m}\| \|T\|^{m}\rho^{m} = \|f\|_{\|T\|\rho} $$
for every $\rho>0$.
\end{proof}

\begin{theorem}
\label{theo_Huab}
 \textit{Let $E$ be a (DFC)-space with the approximation property. Then
$\mathcal{H}_{uAb}(E) = \mathcal{H}_{ub}(E) = (\mathcal{H}(E), \tau_{0})$ algebraically and topologically.}
\end{theorem}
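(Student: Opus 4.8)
The plan is to prove the chain
\[
\mathcal{H}_{uAb}(E) = \mathcal{H}_{ub}(E) = (\mathcal{H}(E),\tau_0)
\]
by working through the inductive-limit structure of the uniform bounded-type spaces and comparing it with the compact-open topology. Since $E = F'_c$ is a $(DFC)$-space with the approximation property, I can fix a fundamental family $D \subset cs(E)$ of continuous seminorms; for each $p \in D$ the normed space $E_p$ is naturally available, and by the identity $\mathcal{H}_{u\Theta b}(E) = \bigcup_{p \in D}\pi_p^*(\mathcal{H}_{\Theta b}(E_p))$ from Definition \ref{def_holomorfas_elc}, both $\mathcal{H}_{uAb}(E)$ and $\mathcal{H}_{ub}(E)$ are inductive limits of the Fréchet spaces $\mathcal{H}_{Ab}(E_p)$ and $\mathcal{H}_b(E_p)$ respectively.

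First I would handle the inclusion $\mathcal{H}_{uAb}(E) \subset \mathcal{H}_{ub}(E)$, which is immediate at the level of each factor since $\mathcal{P}_A(^mE_p) \subset \mathcal{P}(^mE_p)$, hence $\mathcal{H}_{Ab}(E_p) \subset \mathcal{H}_b(E_p)$ with a continuous inclusion, and this passes to the inductive limits. For the reverse direction, the key is the preceding lemma: since $E$ has the approximation property, each canonical linking map between the stages can be arranged to be an approximable operator, so that the lemma upgrades each $f \in \mathcal{H}_b(E_p)$ composed with an appropriate approximable transition into a member of $\mathcal{H}_{Ab}$ at a later stage, with continuity controlled by $\|f \circ T\|_\rho \le \|f\|_{\|T\|\rho}$. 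This cofinality argument shows the two inductive limits are identical as locally convex spaces. I would then identify $\mathcal{H}_{ub}(E)$ with $(\mathcal{H}(E),\tau_0)$ by invoking that on a $(DFC)$-space every entire function factors through some $E_p$ (so the underlying sets agree) and by using the separability/approximation structure established in Proposition \ref{HE_frechet} to match the topologies: the Taylor-series approximation by finite-type polynomials, uniform on compact sets, shows that $\tau_0$ and the inductive topology generate the same continuous seminorms.

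**The main obstacle will be the topological identification of the inductive-limit topology with $\tau_0$.** The hard part will be proving that the locally convex inductive topology on $\mathcal{H}_{ub}(E) = \mathrm{ind}\,\mathcal{H}_b(E_p)$ coincides with the compact-open topology $\tau_0$, rather than merely the algebraic equality of the underlying spaces; the inclusion $(\mathcal{H}(E),\tau_0) \hookrightarrow \mathcal{H}_{ub}(E)$ is continuous by a straightforward comparison of seminorms, but the reverse continuity requires genuinely using that $E = F'_c$ is semi-Montel and hemicompact so that compact subsets of $E$ are well-controlled and each bounded-type seminorm $\|\cdot\|_\rho$ on a stage $E_p$ is dominated by a compact-open seminorm. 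Here I would lean on the semi-Montel and hemicompact $k$-space properties recorded in the earlier proposition, together with the factorization of holomorphic functions through $E_p$, to show that the identity map is a topological isomorphism, most cleanly via the open mapping theorem once both spaces are known to be Fréchet.
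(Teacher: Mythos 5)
Your first step --- the identification $\mathcal{H}_{uAb}(E)=\mathcal{H}_{ub}(E)$ via cofinality, using Louren\c{c}o's lemma \cite[Lemma 2.2]{lilian} to make the linking maps $\pi_{pq}\colon E_q\rightarrow E_p$ approximable and the composition lemma to push each $\mathcal{H}_{b}(E_p)$ continuously into $\mathcal{H}_{Ab}(E_q)$ --- is sound, and it uses the same two tools as the paper. The genuine gap is in your second step, the identification $\mathcal{H}_{ub}(E)=(\mathcal{H}(E),\tau_0)$. First, the fact you ``invoke,'' that every entire function on a $(DFC)$-space factors as $f_p\circ\pi_p$ with $f_p$ of bounded type, is not a routine consequence of the earlier propositions: it is precisely the theorem of Galindo, Garc\'ia and Maestre \cite[Corollary 1]{GGM} that the paper cites. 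Second, your proposed topological argument fails. The open mapping (or closed graph) theorem is not available: $\mathcal{H}_{ub}(E)=\mathrm{ind}\,\mathcal{H}_b(E_p)$ is an inductive limit over the directed set $cs(E)$, which has no countable cofinal subfamily when $E=F^{\prime}_c$ is infinite dimensional (a countable cofinal family of seminorms on $F^{\prime}_c$ would mean a countable cofinal family of compact sets in the metrizable space $F$, forcing $F$ to be finite dimensional), so $\mathcal{H}_{ub}(E)$ is not known to be metrizable, Baire, or even webbed; asserting ``once both spaces are known to be Fr\'echet'' is circular, since the Fr\'echet property of $\mathcal{H}_{ub}(E)$ is essentially the conclusion of the theorem. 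Moreover, the claim that each seminorm $\|\cdot\|_\rho$ on a stage is dominated by a compact-open seminorm is false: the Cauchy estimates bound $\|\cdot\|_\rho$ by suprema over sets of the form $\{x\in E: p(x)\le\rho^{\prime}\}$, and these are $0$-neighborhoods in $E$ (scaled polars of compact subsets of $F$), never compact in infinite dimensions. Density of finite-type polynomials (Proposition \ref{HE_frechet}) gives separability, not equality of topologies.

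What the paper does instead is a bornological argument, which is exactly the tool that replaces your open mapping theorem. Since $\mathcal{H}_{uAb}(E)$ is bornological (an arbitrary locally convex inductive limit of Fr\'echet spaces) and $(\mathcal{H}(E),\tau_0)$ is Fr\'echet, hence bornological, it suffices to show that every $\tau_0$-bounded set is contained and bounded in $\mathcal{H}_{uAb}(E)$. The crucial uniformity --- one single seminorm $p$ through which the \emph{whole} bounded family factors --- is obtained by the $\ell_\infty(I)$ trick: a $\tau_0$-bounded family $\{f_i: i\in I\}$ is encoded as one vector-valued map $f=(f_i)_{i\in I}\in\mathcal{H}(E;\ell_\infty(I))$, the vector-valued result of \cite[Corollary 1]{GGM} factors $f=f_p\circ\pi_p$ with $f_p\in\mathcal{H}_b(E_p;\ell_\infty(I))$, and then Louren\c{c}o's lemma plus the composition lemma upgrade this to $\mathcal{H}_{Ab}(E_q;\ell_\infty(I))$. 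Your proof can be repaired, and indeed slightly streamlined, if you cite the scalar case of \cite[Corollary 1]{GGM} in its full strength (algebraic \emph{and} topological equality $(\mathcal{H}(E),\tau_0)=\mathcal{H}_{ub}(E)$) and then apply your cofinality argument; but as written, the passage from the inductive limit topology to $\tau_0$ has no valid justification.
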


\begin{proof}
 We first establish the continuous inclusions
$$ \mathcal{H}_{uAb}(E) \hookrightarrow \mathcal{H}_{ub}(E) \hookrightarrow (\mathcal{H}(E), \tau_{0}). $$
Since
$$ \mathcal{H}_{uAb}(E) = \bigcup_{p \in cs(E)} \pi_{p}^{*}(\mathcal{H}_{Ab}(E_{p})) $$ and
$$ \mathcal{H}_{ub}(E) = \bigcup_{p \in cs(E)} \pi_{p}^{*}(\mathcal{H}_{b}(E_{p})), $$ it is clear that
$\mathcal{H}_{uAb}(E) \subset \mathcal{H}_{ub}(E)$, and the inclusion mapping is continuous.
Since the inclusion mapping
$\pi_{p}^{*}(\mathcal{H}_{b}(E_{p})) \hookrightarrow (\mathcal{H}(E), \tau_{0})$ 
is clearly continuous, we obtain the continuous inclusion
$\mathcal{H}_{ub}(E) \hookrightarrow (\mathcal{H}(E), \tau_{0})$. 

We next show that $\mathcal{H}_{uAb}(E) = (\mathcal{H}(E), \tau_{0})$ algebraically and topologically.
We know that $\mathcal{H}_{uAb}(E)$ is bornological, and $(\mathcal{H}(E), \tau_{0})$ is a Fr\'echet space, 
in particular bornological. Hence it suffices to show that each bounded subset of $(\mathcal{H}(E), \tau_{0})$
is contained and bounded in $\mathcal{H}_{uAb}(E)$. Let $\{f_{i}: i \in I\}$ be a bounded subset of 
$(\mathcal{H}(E), \tau_{0})$. Let $f \in \mathcal{H}(E; \ell_{\infty}(I))$ be defined by
$f(x) = (f_{i}(x))_{i \in I}$ for every $x \in E$.  By a result of Galindo et al. (see \cite[Corollary 1]{GGM}),
$$ (\mathcal{H}(E; \ell_{\infty}(I)), \tau_{0}) = \mathcal{H}_{ub}(E; \ell_{\infty}(I)) $$ 
algebraically and topologically. In particular $ f \in \mathcal{H}_{ub}(E; \ell_{\infty}(I))$
and therefore there are $p \in cs(E)$ and $f_{p} \in \mathcal{H}_{b}(E; \ell_{\infty}(I))$ such that 
$f = f_{p} \circ \pi_{p}$. By a result of Louren\c{c}o (see \cite[Lemma 2.2]{lilian}), there are $q \in cs(E)$, $q \ge p$
such that the canonical mapping $\pi_{pq}: E_{q} \rightarrow E_{p}$ is an approximate operator.
Let $f_{q} = f_{p} \circ \pi_{pq}$. By the preceding lemma $f_{q} \in \mathcal{H}_{Ab}(E_{q}; \ell_{\infty}(I))$
and
$$ f = f_{p} \circ \pi_{p} = f_{p} \circ \pi_{pq} \circ \pi_{q} = f_{q} \circ \pi_{q}. $$
Thus $f = (f_{i})_{i \in I} \in \mathcal{H}_{uAb}(E; \ell_{\infty}(I))$, and therefore $\{f_{i}: i \in I\}$
is a bounded subset of $\mathcal{H}_{uAb}(E)$, as asserted.
\end{proof}

\textit{Proof of Theorem \ref{theo_dfc}}. By Proposition \ref{HE_frechet} and Theorem \ref{theo_Huab} $\mathcal{H}_{uAb}(E) = (\mathcal{H}(E), \tau_{0})$
is a separable Fr\'echet space. If $D \subset cs(E)$ is any fundamental family, then it follows from Example \ref{nuclear} that $(\mathcal{P}_{A}(^{m}E_{p}))_{m=0}^{\infty}$
is a $\pi_{1}$-holomorphy type for every $p \in D$. By Proposition \ref{prop_hypercyclicity} there are dense subspaces $H_{V}$ and $H_{W}$ of $\mathcal{H}_{uAb}(E)$ and there is a linear mapping
$S\colon H_{W} \rightarrow H_{V}$ such that:

(a) $L^{n}f \rightarrow 0$ when $n \rightarrow \infty$ for every $f \in H_{V}$;

(b) $S^{n}f \rightarrow 0$ when $n \rightarrow \infty$ for every $f \in H_{W}$;

(c) $L \circ S (f) =f$ for every $f \in H_{W}$.

By Theorem \ref{criterion} the operator $L$ is mixing.     \hfill$\square$

\section{A counterexample}

We present a simple example of convolution operator which is not hypercyclic.
So far we have only considered the compact-open topology $\tau_{0}$ on the space $\mathcal{H}(E)$. But now we will also 
consider the compact-ported topology $\tau_{\omega}$ introduced by Nachbin \cite{nachbin1}, and the bornological topology $\tau_{\delta}$ introduced 
by Coeur\'e \cite{coeure} and Nachbin \cite{nachbin3}. For background information on these topologies we refer the reader to the book of Dineen \cite{Di2}.

\begin{theorem}

(a) $(\mathcal{H}(\mathbb{C}^{\mathbb{N}}), \tau_{0}) = (\mathcal{H}(\mathbb{C}^{\mathbb{N}}), \tau_{\omega}) \ne 
(\mathcal{H}(\mathbb{C}^{\mathbb{N}}), \tau_{\delta}) = \mathcal{H}_{ub}(\mathbb{C}^{\mathbb{N}})$.

(b) For each $a \in \mathbb{C}^{\mathbb{N}}$, the translation operator $\tau_{a}$ is a 
convolution operator on $(\mathcal{H}(\mathbb{C}^{\mathbb{N}}), \tau)$ which is not hypercyclic, for $\tau = \tau_{0}$, 
$\tau_{\omega}$ and $\tau_{\delta}$.  

\end{theorem}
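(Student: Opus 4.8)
The plan is to treat part (a) as a structural description of $\mathcal{H}(\mathbb{C}^{\mathbb{N}})$ together with its three topologies, and then to deduce part (b) from the single fact that every entire function on $\mathbb{C}^{\mathbb{N}}$ depends on only finitely many coordinates. First I would fix the fundamental family $D=\{p_n:n\in\mathbb{N}\}\subset cs(\mathbb{C}^{\mathbb{N}})$ with $p_n(x)=\max_{1\le i\le n}|x_i|$, so that $(\mathbb{C}^{\mathbb{N}})_{p_n}=\mathbb{C}^n$ and $\pi_{p_n}=:\pi_n$ is the projection onto the first $n$ coordinates. Since each $\mathbb{C}^n$ is finite dimensional, $\mathcal{H}_b(\mathbb{C}^n)=\mathcal{H}(\mathbb{C}^n)$, and hence $\mathcal{H}_{ub}(\mathbb{C}^{\mathbb{N}})=\bigcup_n\pi_n^{*}(\mathcal{H}(\mathbb{C}^n))$ is precisely the space of entire functions factoring through some $\pi_n$, carrying the associated inductive-limit topology. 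The crux of part (a) is that $\mathcal{H}(\mathbb{C}^{\mathbb{N}})=\mathcal{H}_{ub}(\mathbb{C}^{\mathbb{N}})$ as sets, i.e. every $f\in\mathcal{H}(\mathbb{C}^{\mathbb{N}})$ satisfies $f=g\circ\pi_n$ for some $n$. I expect this to be the main obstacle: continuity of $f$ at $0$ yields a cylindrical neighborhood $U=\{x:|x_i|<\varepsilon,\ i\le n\}$ on which $f$ is bounded, and $U$ contains the entire tail subspace $\{x_1=\cdots=x_n=0\}$; restricting $f$ to the finite-dimensional slices of this subspace and applying the classical Liouville theorem forces $f$ to be constant there, and a translation argument should propagate independence of the tail coordinates. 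The delicate point is securing a \emph{single} $n$ valid globally rather than one depending on the base point; I would either supply this uniformity or invoke the corresponding description of the standard example $\mathbb{C}^{\mathbb{N}}$ in \cite{Di2}.

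Having identified the underlying set, I would record the topological statements for $\mathbb{C}^{\mathbb{N}}$ by citing the standard facts in \cite{Di2}: that $\tau_0=\tau_\omega$, so that $(\mathcal{H}(\mathbb{C}^{\mathbb{N}}),\tau_0)$ is a Fr\'echet space, and that $\tau_\delta$ is its associated bornological topology, realized as the inductive limit above. The inequality $\tau_\omega\ne\tau_\delta$ is then immediate from this description: $(\mathcal{H}(\mathbb{C}^{\mathbb{N}}),\tau_\omega)=(\mathcal{H}(\mathbb{C}^{\mathbb{N}}),\tau_0)$ is metrizable, whereas $\mathcal{H}_{ub}(\mathbb{C}^{\mathbb{N}})=\bigcup_n\pi_n^{*}(\mathcal{H}(\mathbb{C}^n))$ is a proper inductive limit of an increasing sequence of distinct Fr\'echet spaces and therefore cannot be metrizable.

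For part (b), the translation $\tau_a$ is linear, continuous for each of $\tau_0,\tau_\omega,\tau_\delta$, and satisfies $\tau_a\tau_b=\tau_{a+b}=\tau_b\tau_a$, so it commutes with every translation and is thus a convolution operator. The key observation is that translation preserves the finite coordinate-dependence: if $f=g\circ\pi_n$, then $(\tau_a^{k}f)(x)=f(x-ka)=g(\pi_n x-k\pi_n a)$, so the whole orbit $\{\tau_a^{k}f:k\ge 0\}$ lies in $\mathcal{H}_n:=\pi_n^{*}(\mathcal{H}(\mathbb{C}^n))=\{h\in\mathcal{H}(\mathbb{C}^{\mathbb{N}}):h=h\circ\pi_n\}$. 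By part (a) every $f$ lies in some $\mathcal{H}_n$, so it suffices to check that each $\mathcal{H}_n$ is a proper, $\tau_0$-closed subspace: properness holds because the coordinate function $x\mapsto x_{n+1}$ is not in $\mathcal{H}_n$, and closedness holds because the relation $h=h\circ\pi_n$ passes to pointwise (in particular $\tau_0$-) limits. Hence the orbit of any candidate hypercyclic vector is trapped in a proper $\tau_0$-closed subspace. To finish, I would use $\tau_0\le\tau_\omega\le\tau_\delta$: the nonempty complement $\mathcal{H}(\mathbb{C}^{\mathbb{N}})\setminus\mathcal{H}_n$ is $\tau_0$-open, hence $\tau_\omega$- and $\tau_\delta$-open, and disjoint from the orbit, so the orbit fails to be dense in every one of the three topologies. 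Since $f$ was arbitrary, $\tau_a$ has no hypercyclic vector, which is the assertion of (b). The only genuinely nontrivial ingredient is the structural theorem of part (a); once it is available, part (b) is a short orbit-confinement argument.
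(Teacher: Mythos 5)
Your part (b) and most of your part (a) follow the same route as the paper: you use the factorization theorem $\mathcal{H}(\mathbb{C}^{\mathbb{N}})=\bigcup_n\pi_n^{*}(\mathcal{H}(\mathbb{C}^n))$ (which the paper cites to Barroso and Ansemil rather than proves), the identification of $\mathcal{H}_{ub}(\mathbb{C}^{\mathbb{N}})$ with the inductive limit of the spaces $\mathcal{H}(\mathbb{C}^n)$, and the orbit-confinement argument: the orbit of $f=g\circ\pi_n$ under $\tau_a$ stays in the proper closed subspace $\pi_n^{*}(\mathcal{H}(\mathbb{C}^n))$, so it cannot be dense in any of the three topologies. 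That part is correct, and your worry about securing a single $n$ in the Liouville sketch is actually unfounded: the $n$ given by continuity at $0$ works globally, because once $f(x',y)=f(x',0)$ for $x'$ in a polydisc and all tails $y$, the identity theorem applied to the entire functions $x'\mapsto f(x',y)$ and $x'\mapsto f(x',0)$ on $\mathbb{C}^n$ propagates the equality to all $x'$; in any case the fallback citation is legitimate.

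The genuine gap is in your derivation of $\tau_{\omega}\ne\tau_{\delta}$. The claim that $(\mathcal{H}(\mathbb{C}^{\mathbb{N}}),\tau_0)$ is a Fr\'echet space is false (and it is not implied by $\tau_0=\tau_{\omega}$). Indeed it contradicts the very factorization you rely on: $\mathcal{H}(\mathbb{C}^{\mathbb{N}})=\bigcup_n\pi_n^{*}(\mathcal{H}(\mathbb{C}^n))$ exhibits the space as a countable union of proper $\tau_0$-closed subspaces, hence it is meager in itself and cannot be a Baire space, whereas a Fr\'echet space is Baire. One can also see non-metrizability directly: if compact sets $K_1\subset K_2\subset\cdots$ gave a countable generating family of seminorms $\sup_{K_j}|\cdot|$, put $r_j=\sup_{x\in K_j}|x_j|$ and $K=\prod_{j}\overline{D}(0,r_j+1)$, which is compact by Tychonoff; domination $\sup_K|f|\le C\sup_{K_j}|f|$ for all $f$ fails for $f_N(x)=x_j^N$, since $\sup_K|f_N|=(r_j+1)^N$ while $\sup_{K_j}|f_N|\le r_j^N$. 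So $(\mathcal{H}(\mathbb{C}^{\mathbb{N}}),\tau_0)$ is not metrizable, and your ``metrizable versus non-metrizable'' comparison collapses. (Your claim that the strict inductive limit $\mathrm{ind}\,\mathcal{H}(\mathbb{C}^n)$ is non-metrizable is correct, but it no longer separates the two topologies.) The inequality $\tau_{\omega}\ne\tau_{\delta}$ is a genuinely nontrivial fact; note also that appealing to ``$\tau_{\delta}$ is the bornologification of $\tau_0$'' cannot help by itself, since $\tau_0\ne\tau_{\delta}$ is then exactly the assertion that $\tau_0$ is not bornological, which still requires proof. The paper handles this point by citation (Dineen, \cite[p. 45, Corollary 3.2]{Di1} or \cite[Example 3.24(a)]{Di2}), and the cleanest repair of your argument is to do the same.
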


\begin{proof} (a) By a result of Barroso (see \cite[p. 537, Teorema 2.2]{Ba}), $\tau_{0}=\tau_{\omega}$ on $\mathcal{H}(\mathbb{C}^{\mathbb{N}})$.
By a result of Dineen (see \cite[p. 45, Corollary 3.2]{Di1} or \cite[Example 3.24(a)]{Di2}) we have $\tau_{\omega} \ne \tau_{\delta}$ on
$\mathcal{H}(\mathbb{C}^{\mathbb{N}})$. 

To see the connection with $\mathcal{H}_{ub}(\mathbb{C}^{\mathbb{N}})$, for each $n\in\mathbb{N}$ and for $\tau = \tau_{0}$, $\tau_{\omega}$ and $\tau_{\delta}$,
consider the canonical inclusion $J_{n}\colon \mathbb{C}^{n} \hookrightarrow \mathbb{C}^{\mathbb{N}}$, 
the canonical projection $\pi_{n}\colon \mathbb{C}^{\mathbb{N}} \rightarrow \mathbb{C}^{n}$ and the 
corresponding mappings
$$ J_{n}^{*}\colon f \in (\mathcal{H}(\mathbb{C}^{\mathbb{N}}), \tau) \rightarrow f \circ J_{n} \in (\mathcal{H}(\mathbb{C}^{n}), \tau) $$ and
$$ \pi_{n}^{*}\colon f_{n} \in (\mathcal{H}(\mathbb{C}^{n}), \tau) \rightarrow f_{n} \circ \pi_{n} \in (\mathcal{H}(\mathbb{C}^{\mathbb{N}}), \tau). $$                 
Since $J_{n}^{*} \circ \pi_{n}^{*} $ is the identity on $\mathcal{H}(\mathbb{C}^{n})$, it follows that $(\mathcal{H}(\mathbb{C}^{n}), \tau)$ is topologically isomorphic to a complemented subspace of 
$(\mathcal{H}(\mathbb{C}^{\mathbb{N}}), \tau)$. In particular 
$\pi_n^*(\mathcal{H}(\mathbb{C}^{n}))$ is a proper closed subspace of ($\mathcal{H}(\mathbb{C}^{\mathbb{N}}), \tau)$. By a result of Barroso (see \cite[p. 38, Corol\'ario]{Ba} or \cite[Proposition 1.1]{A}) 
$$ \mathcal{H}(\mathbb{C}^{\mathbb{N}}) = \bigcup_{n=1}^{\infty} \{ f_{n} \circ \pi_{n}\colon f_{n} \in \mathcal{H}(\mathbb{C}^{n}) \}
= \bigcup_{n=1}^{\infty} \pi_{n}^{*}(\mathcal{H}(\mathbb{C}^{n})). $$                                                                
If we define $p_{n} \in cs(\mathbb{C}^{\mathbb{N}})$ by
$$ p_{n}(x) = \sup_{j \le n} |\xi_{j}| \hspace{.1in} \mbox{for every $x = (\xi_{j})_{j=1}^{\infty} \in \mathbb{C}^{\mathbb{N}}$}, $$ 
then we can readily see that the normed space $(\mathbb{C}^{\mathbb{N}})_{p_{n}}$ is topologically isomorphic to $\mathbb{C}^{n}$,
and therefore
$$ \mathcal{H}_{ub}(\mathbb{C}^{\mathbb{N}}) = ind (\mathcal{H}((\mathbb{C}^{\mathbb{N}})_{p_{n}}, \tau_{0})
= ind (\mathcal{H}(\mathbb{C}^{n}), \tau_{0}). $$
By a result of Ansemil (see \cite[Proposition 1.3]{A})
$$ (\mathcal{H}(\mathbb{C}^{\mathbb{N}}), \tau_{\delta}) = ind (\mathcal{H}(\mathbb{C}^{n}), \tau_{0}) = \mathcal{H}_{ub}(\mathbb{C}^{\mathbb{N}}). $$

(b) Let $a \in  \mathbb{C}^{\mathbb{N}}$ and assume that the translation operator 
$$ \tau_{a}\colon (\mathcal{H}(\mathbb{C}^{\mathbb{N}}), \tau_{0}) \rightarrow (\mathcal{H}(\mathbb{C}^{\mathbb{N}}), \tau_{0}) $$
were hypercyclic. Then there would exist $f \in \mathcal{H}(\mathbb{C}^{\mathbb{N}})$ such that the set
$\{f, \tau_{a}f, \tau_{a}^{2}f, ... \}$
would be dense in $(\mathcal{H}(\mathbb{C}^{\mathbb{N}}), \tau)$. Let $n\in\mathbb{N}$ be such that $f = f_{n} \circ \pi_{n}$, with $f_{n} \in \mathcal{H}(\mathbb{C}^{n})$.  
Then
$$ \{f, \tau_{a}f, \tau_{a}^{2}f,\ldots\}  = \pi_{n}^{*}(\{f_{n}, \tau_{\pi_n(a)}f_{n}, \tau_{\pi_n(a)}^{2}f_{n},\ldots\} )
\subset \pi_{k}^{*}(\mathcal{H}(\mathbb{C}^{n})) , $$
a contradiction, since $\pi_{n}^{*}(\mathcal{H}(\mathbb{C}^{n}))$ is a proper closed subspace of $(\mathcal{H}(\mathbb{C}^{\mathbb{N}}), \tau)$.

\end{proof}

\begin{corollary}
For each $a \in \mathbb{C}^{\mathbb{N}}$, the translation operator $\tau_{a}$ is a
convolution operator on $\mathcal{H}_{u b} (\mathbb{C}^{\mathbb{N}})$ which is not hypercyclic.
\end{corollary}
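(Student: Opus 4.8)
The plan is to read the Corollary off the preceding Theorem, using crucially that its part (a) is an identification of \emph{topological vector spaces}. First I would record that, by part (a) of the Theorem, $\mathcal{H}_{ub}(\mathbb{C}^{\mathbb{N}})$ and $(\mathcal{H}(\mathbb{C}^{\mathbb{N}}), \tau_{\delta})$ are one and the same locally convex space, both algebraically and topologically. Consequently a linear self-map of $\mathcal{H}(\mathbb{C}^{\mathbb{N}})$ is continuous for the $\mathcal{H}_{ub}$-topology exactly when it is continuous for $\tau_{\delta}$, and a subset is dense in one space exactly when it is dense in the other. In other words, both the notion of convolution operator (a continuous linear map commuting with all translations) and the notion of hypercyclicity (existence of a dense orbit) are invariant under this topological identification and therefore transfer verbatim between the two descriptions of the space.

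Next I would check that $\tau_{a}$ really is a convolution operator on $\mathcal{H}_{ub}(\mathbb{C}^{\mathbb{N}})$ in the sense of Definition \ref{def-convolution}. Linearity is clear. Writing $f = f_{n} \circ \pi_{n}$ with $f_{n} \in \mathcal{H}(\mathbb{C}^{n})$, the elementary computation $(\tau_{a}f)(x) = f(x-a) = f_{n}(\pi_{n}(x) - \pi_{n}(a))$ shows that $\tau_{a}f = (\tau_{\pi_{n}(a)} f_{n}) \circ \pi_{n}$, so $\tau_{a}$ maps $\mathcal{H}_{ub}(\mathbb{C}^{\mathbb{N}})$ into itself; its continuity follows from the identification in part (a) together with the fact, already asserted in part (b) of the Theorem, that $\tau_{a}$ is a convolution operator on $(\mathcal{H}(\mathbb{C}^{\mathbb{N}}), \tau_{\delta})$. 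Since translations commute, $\tau_{a}(\tau_{b}f) = \tau_{a+b}f = \tau_{b}(\tau_{a}f)$ for every $b \in \mathbb{C}^{\mathbb{N}}$, which is exactly the convolution property.

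It then remains only to combine these observations. By part (b) of the Theorem applied with $\tau = \tau_{\delta}$, the orbit of any $f$ under $\tau_{a}$ fails to be dense in $(\mathcal{H}(\mathbb{C}^{\mathbb{N}}), \tau_{\delta})$; since that space coincides topologically with $\mathcal{H}_{ub}(\mathbb{C}^{\mathbb{N}})$, the same orbit fails to be dense there as well, so $\tau_{a}$ is not hypercyclic on $\mathcal{H}_{ub}(\mathbb{C}^{\mathbb{N}})$. I do not expect any genuine obstacle: the whole substance of the argument is the topological identification $\mathcal{H}_{ub}(\mathbb{C}^{\mathbb{N}}) = (\mathcal{H}(\mathbb{C}^{\mathbb{N}}), \tau_{\delta})$ furnished by part (a), and the only point deserving a moment's care is to confirm that, being purely algebraic-topological properties, both "convolution operator" and "hypercyclic" are preserved by this isomorphism, so the negative result for $\tau_{\delta}$ passes directly to $\mathcal{H}_{ub}(\mathbb{C}^{\mathbb{N}})$.
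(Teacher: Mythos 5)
Your proposal is correct and follows exactly the route the paper intends: the corollary is an immediate consequence of the preceding theorem, since part (a) identifies $\mathcal{H}_{ub}(\mathbb{C}^{\mathbb{N}})$ with $(\mathcal{H}(\mathbb{C}^{\mathbb{N}}), \tau_{\delta})$ algebraically and topologically, and part (b) (with $\tau = \tau_{\delta}$) says $\tau_{a}$ is a non-hypercyclic convolution operator on that space. Your extra verifications (that $\tau_{a}$ preserves the uniform-bounded-type representation $f = f_{n}\circ\pi_{n}$, and that both ``convolution operator'' and ``hypercyclic'' are invariant under the topological identification) are sound but amount to spelling out what the paper leaves implicit.
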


\bigskip

\noindent Vin\'icius V. F\'avaro\\
FAMAT-UFU\\
Av. Jo\~ao Naves de \'Avila, 2121\\
38.400-902, Uberl\^{a}ndia, Brazil\\
e-mail: vvfavaro@gmail.com

\bigskip

\noindent Jorge Mujica\\
IMECC-UNICAMP\\
Rua Sergio Buarque de Holanda, 651\\
13083-859, Campinas, SP, Brazil\\
e-mail: mujica@ime.unicamp.br

\end{document}